\renewenvironment{abstract}
{\small\vspace{-1em}
\begin{center}
\bfseries\abstractname\vspace{-.5em}\vspace{0pt}
\end{center}
\list{}{
\setlength{\leftmargin}{0.6in}%
\setlength{\rightmargin}{\leftmargin}}%
\item\relax}
{\endlist}
\declaretheorem[name=Theorem, numberwithin=section]{theorem}
\declaretheorem[name=Lemma, sibling=theorem]{lemma}
\declaretheorem[name=Proposition, sibling=theorem]{proposition}
\declaretheorem[name=Definition, sibling=theorem]{definition}
\declaretheorem[name=Claim, sibling=theorem]{claim}
\declaretheorem[name=Observation, style=remark, sibling=theorem]{observation}
\declaretheorem[name=Question, style=remark, sibling=theorem]{question}
\def\cqedsymbol{\ifmmode$\lrcorner$\else{\unskip\nobreak\hfil
\penalty50\hskip1em\null\nobreak\hfil$\lrcorner$
\parfillskip=0pt\finalhyphendemerits=0\endgraf}\fi} 
\newcommand{\cqed}{\renewcommand{\qed}{\cqedsymbol}}
\let\leq\leqslant
\let\geq\geqslant
\title{Bipartite Tur\'an number of paths and other trees\footnote{This work was done while the second author was doing a bachelor internship in LaBRI with the other two authors. We gratefully acknowledge support from both institutions that made this possible.}}
\author[1]{Marthe Bonamy}
\author[2]{Théotime Leclere}
\author[1]{Timothé Picavet}
\affil[1]{CNRS, LaBRI, Université de Bordeaux, Bordeaux, France.}
\affil[2]{Université Paris-Saclay, ENS Paris-Saclay, 91190 Gif-sur-Yvette, France}
\date{\today}
\begin{document}

\maketitle

\begin{abstract}
We solve a recent question of Caro, Patk\'os and Tuza by determining the exact maximum number of edges in a bipartite connected graph as a function of the longest path it contains as a subgraph and of the number of vertices in each side of the bipartition. This was previously known only in the case where both sides of the bipartition have equal size and the longest path has size at most $5$. We also discuss possible generalizations replacing ``path'' with some specific types of trees.

\end{abstract}


\section{Introduction}\label{sec:intro}

We study the maximum number of edges in a graph avoiding certain subgraphs. This follows the tradition of Tur\'an’s theorem~\cite{turan1941}, which determines the maximum number of edges in an $n$-vertex graph avoiding a complete subgraph of given order. This result initiated the systematic study of extremal problems with forbidden subgraphs and was later extended by the Erd\H{o}s--Stone theorem~\cite{ErdosStone1946}, which asymptotically gives the Tur\'an number of every non-bipartite forbidden graph, i.e. the maximum number of edges in a $n$-vertex graph avoiding a fixed subgraph. For bipartite graphs, however, the Erd\H{o}s--Stone theorem provides no information on the Tur\'an number.
The bipartite case remains open and has motivated several central conjectures, most notably the long-standing Erd\H{o}s--S\'os conjecture, which states that every $n$-vertex graph excluding a tree on $k$ vertices has less than $(k-2)n/2$ edges. 
This conjecture has spawned new research on the Tur\'an number for specific graph families.
\cite{gallai1959maximal} gives asymptotics for connected graphs excluding a path of fixed size, while
\cite{Gyarfas1984Paths} gives the exact Tur\'an number for bipartite graphs excluding a path of fixed size.
Recently, Caro, Patk\'os and Tuza~\cite{caro2025bipartite} focused on the case where the host graph is both bipartite and connected. We refer the reader to that paper and~\cite{yuan2017variation} for more extended history.

To study these questions in the bipartite connected setting, Caro, Patk\'os and Tuza introduce the following notation.
\begin{definition}[\cite{caro2025bipartite}]
    For any fixed integers $a,b$ and graph $H$, we let $ex_{b,c}(a,b,H)$ be the maximum number of edges in a connected bipartite graph with part sizes $a$ and $b$ and with no $H$ as subgraph.
\end{definition}

Caro, Patkós and Tuza~\cite{caro2025bipartite} studied the value of $ex_{b,c}$ for trees. They found the exact value of this number for all trees with $6$ vertices or less, for double-stars and for some case of spiders. Notably, we mention the following result, which will be useful to us later:

\begin{proposition}[Theorem 7 of~\cite{caro2025bipartite}]\label{pr:Caro}
    For every $n > 3$, we have $ex_{b,c}(n,n,P_5) = ex_{b,c}(n,n,P_6) = 2n-1$.
\end{proposition}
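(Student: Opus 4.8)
The plan is to prove matching bounds and combine them with the trivial inequality $ex_{b,c}(n,n,P_5)\le ex_{b,c}(n,n,P_6)$ (valid since a $P_5$-free graph is in particular $P_6$-free). For the lower bound I would exhibit one connected balanced bipartite graph on parts $A,B$ of size $n$ that is $P_5$-free and has exactly $2n-1$ edges: take the \emph{balanced double star}, i.e.\ fix $a_1\in A$ and $b_1\in B$ and include all $n$ edges from $a_1$ to $B$ together with the $n-1$ remaining edges from $b_1$ to $A\setminus\{a_1\}$. It is connected (every vertex meets $a_1$ or $b_1$) and has $2n-1$ edges, and it is $P_5$-free because every vertex other than $a_1,b_1$ has degree $1$, so a path can use at most two internal vertices and hence at most four vertices. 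This gives $ex_{b,c}(n,n,P_5)\ge 2n-1$, so it remains to prove $ex_{b,c}(n,n,P_6)\le 2n-1$.

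For the upper bound I would show the stronger fact that every connected bipartite $P_6$-free graph $G$ with $|A|=|B|=n>3$ is a tree; as a spanning tree on $2n$ vertices has $2n-1$ edges, this suffices. Suppose $G$ has a cycle $C$. It has even length, and if $|C|\ge 6$ then $C$ already contains a $P_6$ (delete an edge); so $C$ is a $4$-cycle $a_1b_1a_2b_2$ with $a_1,a_2\in A$, $b_1,b_2\in B$. As $G$ is connected with more than four vertices, some edge leaves $V(C)$; after possibly exchanging $A$ and $B$ and relabelling within $C$, we may take it to be an edge $a_1w$ with $w\in B\setminus V(C)$. The crux is the claim that no vertex of $A\setminus\{a_1,a_2\}$ has a neighbour in $G$, which is absurd because connectedness forces these $n-2\ge1$ vertices to be attached to the rest of $G$.

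To prove the claim, choose $a^\ast\in A\setminus\{a_1,a_2\}$ minimizing $d:=\operatorname{dist}_G(a^\ast,V(C))$ (finite by connectedness, $\ge1$ since $a^\ast\notin V(C)$). If $d\ge3$, then two steps along a shortest path from $a^\ast$ to $V(C)$ reach another vertex of $A\setminus\{a_1,a_2\}$ strictly closer to $C$, contradicting minimality; so $d\in\{1,2\}$. If $d=1$ then $a^\ast$ is adjacent to $b_1$ or $b_2$, and for $a^\ast\sim b_1$ (the other case symmetric) the vertices $a^\ast,b_1,a_2,b_2,a_1,w$ form a $P_6$ (they are distinct since $a^\ast\notin\{a_1,a_2\}$ and $w\notin V(C)$; the edges used are $a^\ast b_1$, three edges of $C$, and $a_1w$). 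If $d=2$ then $a^\ast\sim b'\sim a_i$ with $b'\notin V(C)$ and $i\in\{1,2\}$, and for $i=1$ the vertices $a^\ast,b',a_1,b_1,a_2,b_2$ form a $P_6$. Either case contradicts $P_6$-freeness, so $G$ is acyclic. I expect this claim to be the main obstacle: one must exclude attachments to $C$ at arbitrary distance, not just immediate neighbours, which is exactly what the extremal (minimum-distance) choice of $a^\ast$ handles, and this is also where the hypothesis $n>3$ (nonemptiness of $A\setminus\{a_1,a_2\}$, and symmetrically of $B\setminus\{b_1,b_2\}$) is used. Putting the bounds together with $ex_{b,c}(n,n,P_5)\le ex_{b,c}(n,n,P_6)$ gives $ex_{b,c}(n,n,P_5)=ex_{b,c}(n,n,P_6)=2n-1$.
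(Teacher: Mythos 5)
Your proof is correct, but note that the paper does not prove this statement at all: it is imported verbatim as Theorem~7 of Caro, Patk\'os and Tuza~\cite{caro2025bipartite}, so there is no internal proof to compare against. Your argument is a valid self-contained substitute. The lower bound via the balanced double star is exactly the standard extremal example (and coincides with the paper's general construction in Proposition~\ref{th:borneinf_general} specialised to $k=3$), and your $P_5$-freeness argument (only two vertices of degree at least $2$, so at most two internal vertices on any path) is sound. For the upper bound, your stronger structural claim --- that a connected balanced bipartite $P_6$-free graph with $n>3$ must be acyclic, hence a tree with exactly $2n-1$ edges --- checks out: a cycle of length at least $6$ contains $P_6$ outright, and for a $4$-cycle your minimal-distance choice of $a^\ast\in A\setminus\{a_1,a_2\}$ correctly reduces to $d\in\{1,2\}$, and in both cases the exhibited six vertices are distinct and the five required edges are present, yielding a $P_6$. (Your framing of this as "the claim that no vertex of $A\setminus\{a_1,a_2\}$ has a neighbour" is logically muddled --- what you actually do is derive a contradiction directly from the existence of the cycle --- but the underlying argument is fine.) Compared with the route the paper takes for general paths (induction on $k+a+b$ using Lemmas~\ref{lem:inductionUnbalanced}--\ref{lem:Casdebase}, which for the base case leans on this very proposition), your direct treatment of the $k=3$ case is more elementary and gives slightly more: the extremal graphs are exactly trees, whereas the general machinery only delivers the edge count.
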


\paragraph{Contributions.}
In the conclusion of~\cite{caro2025bipartite}, the authors posed several open problems, two of which directly motivate our work.
\begin{question}[Question 16 of \cite{caro2025bipartite}]
    \label{qu:question16deCPT}
    Determine at least asymptotically $ex_{b,c}(n,n,P_k)$ for any fixed $k$.
\end{question}
\begin{question}[Part of Question 14 of \cite{caro2025bipartite}]
    \label{qu:question14deCPT}
    Determine $ex_{b,c}$ for specific families of trees.
\end{question}
We solve the first question completely, and provide a partial answer to the second. Namely, we provide the exact value of the connected bipartite Tur\'an number for every path. 

\begin{restatable}{theorem}{thmPaths}\label{th:main}
    For every $k\geq 3$ and every $b \geq a\geq k$, we have $ex_{b,c}(a,b,P_{2k})=ex_{b,c}(a,b,P_{2k-1}) = (k-2)(b-1) + a$.
\end{restatable}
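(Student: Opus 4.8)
I would exhibit the following graph $H$. Pick $k-2$ vertices $x_1,\dots,x_{k-2}\in A$ (possible since $a\ge k$) and join each of them to every vertex of $B$; then pick one vertex $y^\star\in B$ and join it to each of the remaining $a-(k-2)\ge 2$ vertices of $A$. Then $H$ is connected (every vertex of $B$ is adjacent to $x_1$, and the remaining vertices of $A$ are adjacent to $y^\star$), has $(k-2)b+(a-k+2)=(k-2)(b-1)+a$ edges, and contains no $P_{2k-1}$: a path using one of the degree-$1$ vertices of $A$ meets at most the $k-2$ vertices $x_i$ and so has at most $2+2(k-2)=2k-2$ vertices, while a path avoiding them lies in $K_{k-2,b}$ and has at most $2(k-2)+1=2k-3$ vertices. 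Hence $ex_{b,c}(a,b,P_{2k-1})\ge (k-2)(b-1)+a$, and the same graph gives the bound for $P_{2k}$ since $P_{2k-1}\subseteq P_{2k}$. Conversely, once we show $e(G)\le(k-2)(b-1)+a$ for every connected bipartite $P_{2k}$-free $G$ with parts $a\le b$ and $a\ge k$, both upper bounds follow, the one for $P_{2k-1}$ because its hypothesis is stronger.

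\textbf{The key lemma.} The heart of the upper bound is the statement that \emph{a connected bipartite graph with parts $X,Y$ of sizes $\ge k$ in which every vertex of $Y$ has degree $\ge k-1$ contains $P_{2k}$}. I would prove this by a P\'osa-type rotation-extension argument on a longest path $P$: assuming $|V(P)|\le 2k-1$, an endpoint lying in $Y$ has all of its $\ge k-1$ neighbours among the $\le k-1$ vertices of $X$ on $P$, which forces $|V(P)\cap X|=k-1$ and that endpoint to be adjacent to \emph{all} of them (and, after a rotation if necessary, the same holds at the other end); using this rigidity together with a vertex of $X$ or of $Y$ outside $P$ — which exists because both sides have more than $k-1$ vertices, here splitting on whether $|V(P)|$ equals $2k-2$ or $2k-1$ — and connectivity, one builds a path on $2k$ vertices, a contradiction. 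Connectivity is essential: $K_{k-1,k}\sqcup K_{k-1,k}$ satisfies every other hypothesis and has no $P_{2k}$. Equivalently, a connected bipartite $P_{2k}$-free graph with both sides of size $\ge k$ has a vertex of degree $\le k-2$ on \emph{each} side of size $\ge k$.

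\textbf{Upper bound.} I would argue by induction on $a+b$, the base case $a=b=k$ being exactly the threshold for a Hamiltonian path in a balanced bipartite graph on $k+k$ vertices, which one checks directly gives $e(G)\le(k-1)^2+1=(k-2)(k-1)+k$. For the inductive step, if $G$ has a degree-$1$ vertex, deleting it decreases both $e(G)$ and the target by exactly $1$, so we may assume $\delta(G)\ge 2$. If $b>a$: by the key lemma there is $v\in B$ with $\deg v\le k-2$; delete it. If $G-v$ is connected it has parts $a\le b-1$ with $a\ge k$, and the induction hypothesis gives $e(G)\le(k-2)(b-2)+a+(k-2)=(k-2)(b-1)+a$. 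If $G-v$ is disconnected, I would pass to the block-cut tree of $G$: since $\delta(G)\ge2$ every leaf block is $2$-connected, and a $2$-connected bipartite $P_{2k}$-free graph has circumference at most $2k-2$, so its number of edges is controlled by a Kopylov-type estimate; peeling leaf blocks one at a time and using that each cut vertex is shared keeps the total under $(k-2)(b-1)+a$. The remaining case $a=b$ with $\delta(G)\ge2$ is handled by the same block analysis: it cannot be disposed of by a single deletion, since that would lower the target by only $1$ while possibly losing $k-2$ edges.

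\textbf{Main obstacle.} The hardest points are (i) making the rotation-extension argument for the key lemma fully rigorous — the extraction of a longer path from the rigid configuration, and the case distinction $|V(P)|\in\{2k-2,2k-1\}$, need care; and (ii) controlling the non-$2$-connected situations: both the disconnected-after-deletion case and, above all, the diagonal case $a=b$ with $\delta(G)\ge 2$ (which genuinely admits $P_{2k}$-free graphs, e.g.\ a balanced cactus of $C_4$'s with more than two pendant quadrilaterals), where no vertex deletion preserves the induction. For (ii) one really needs a structural handle on $2$-connected bipartite graphs of bounded circumference — a bipartite analogue of Kopylov's theorem — together with a careful accounting of how the quantity $(k-2)(b-1)+a$ distributes over the blocks of $G$.
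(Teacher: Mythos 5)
Your lower bound is correct and is exactly the paper's construction ($K_{k-2,b}$ plus pendant vertices attached to a single vertex of $B$), and your ``key lemma'' is true and close in spirit to what the paper proves. But the upper bound as proposed has genuine gaps, and they are precisely the points you yourself flag as obstacles. First, your key lemma only produces a vertex of degree at most $k-2$ in the larger side, with no control on whether its removal disconnects $G$; you then propose to handle disconnection, and the entire diagonal case $a=b$, by a block--cut-tree decomposition together with a ``bipartite analogue of Kopylov's theorem'' and an accounting of how $(k-2)(b-1)+a$ distributes over blocks. None of this is stated precisely, let alone proved, so the inductive step is not closed in exactly the two situations where the problem is hardest. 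Second, the base case $a=b=k$ is not something one ``checks directly'': the assertion that a connected balanced bipartite graph on $2k$ vertices with more than $(k-1)^2+1$ edges has a Hamiltonian path is proved in the paper by a separate induction on $k$ (with the $k\le 3$ case imported from Caro--Patk\'os--Tuza), and it occupies a substantial argument involving cut-vertex and degree analysis.

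The missing idea that makes the paper's induction work without any block decomposition is to prove \emph{stronger} deletion lemmas. In the unbalanced case $b>a$, a DFS tree rooted in $A$ has a leaf $b\in B$ (a parity count on the tree), all of whose neighbours lie on its root-to-leaf path; this yields a vertex of $B$ of degree at most $k-2$ whose removal keeps $G$ connected, and if the DFS leaf itself has degree $k-1$ a short cycle argument produces another vertex of $B$ that works. In the balanced case $a=b>k$, instead of deleting one vertex (which, as you correctly observe, cannot close the induction), the paper deletes a \emph{pair} $x\in A$, $y\in B$ with $d(x)+d(y)\le k-1$ such that $G\setminus\{x,y\}$ stays connected and balanced; these are taken to be an endpoint of a longest path and a DFS leaf in $B$, and the degree bound comes from a ``good block'' pairing argument along the longest path. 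With these two lemmas the induction removes either one vertex of $B$ (cost $\le k-2$, target drops by $k-2$) or one vertex from each side (cost $\le k-1$, target drops by $k-1$), and no structural theorem about $2$-connected bipartite graphs of bounded circumference is needed. As written, your proposal reduces the theorem to an unproven structural result plus an unperformed accounting, so it does not constitute a proof.
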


This answers Question~\ref{qu:question16deCPT} in a strong way: both because we provide the exact upper bound instead of the asymptotic value, and because we do not need the restriction to connected bipartite graphs that are balanced. 

We address Question~\ref{qu:question14deCPT} for the case of brooms. The broom $S_{p,d*1}$ is composed of a path of length $p$ and a star of $d$ branches, which is a tree on $d+1$ leaves.
We extend the result of paths to brooms where the star is larger than the path.

\begin{restatable}{theorem}{thmBrooms}\label{th:d>n}
    For every $k,d \geq 2$ and for every $n$ such that $n\geq d^2/2$ and $d > 2k$, we have $ex_{b,c}(n,n,S_{2k,d}) = ex_{b,c}(n,n,S_{2k+1,d}) = nd $.
\end{restatable}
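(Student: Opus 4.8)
We prove the two equalities together. Since $S_{2k,d}$ is a subgraph of $S_{2k+1,d}$, any $S_{2k,d}$-free graph is also $S_{2k+1,d}$-free, so $ex_{b,c}(n,n,S_{2k,d})\le ex_{b,c}(n,n,S_{2k+1,d})$; it thus suffices to exhibit a connected bipartite graph with parts of size $n$, no $S_{2k,d}$, and exactly $nd$ edges, and to prove that every connected bipartite graph $G$ with parts of size $n$ and $e(G)\ge nd+1$ contains $S_{2k+1,d}$. For the lower bound, take any connected $d$-regular bipartite graph on $n+n$ vertices (for instance the circulant one joining the $i$-th left vertex to the right vertices $i,i+1,\dots,i+d-1$ modulo $n$, which is connected as $d\ge 2$): it has $nd$ edges and maximum degree $d$, whereas $S_{2k,d}$ has a vertex of degree $d+1$ (the star centre, since the attached path is non-trivial); hence it is $S_{2k,d}$-free. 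This needs only $n\ge d$, which is implied by $n\ge d^2/2$.

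For the upper bound, suppose $G$ is connected bipartite with parts $A,B$ of size $n$, $e(G)\ge nd+1$, and --- for contradiction --- that $G$ is $S_{2k+1,d}$-free. The basic observation is: if $\deg(v)\ge d+1$ and the BFS tree from $v$ contains a vertex at distance $2k+1$, then the tree path from it back to $v$ has the form $v,u_1,\dots,u_{2k+1}$ with $u_2,\dots,u_{2k+1}$ at distance $\ge 2$ from $v$, so only $u_1$ is a neighbour of $v$; attaching $d$ of the remaining $\ge d$ neighbours of $v$ gives $S_{2k+1,d}$. Hence every vertex of degree $\ge d+1$ has eccentricity $\le 2k$. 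Also, since $nd+1>(k-1)(n-1)+n=ex_{b,c}(n,n,P_{2k+2})$ by Theorem~\ref{th:main} (with $k$ replaced by $k+1$; the hypotheses hold because $n\ge d^2/2\ge k+1$ and $d>k$), $G$ contains a path of length $2k+1$; take a longest path $P=p_0\cdots p_\ell$. If $\deg(p_0)\ge d+k+1$ (symmetrically $\deg(p_\ell)$), then, since all neighbours of $p_0$ lie on $P$ at odd positions, the subpath $p_0\cdots p_{2k+1}$ meets at most $k+1$ of them, leaving $\ge d$ free, so $G\supseteq S_{2k+1,d}$ --- a contradiction.

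So both ends of $P$ have degree $\le d+k$; still, counting $A$-degrees gives a vertex $v$ with $\deg(v)=d'\ge d+1$, and by the observation every vertex lies within distance $2k$ of $v$. Consider the components $C_1,\dots,C_r$ of $G-v$ and put $t_j=|N(v)\cap C_j|\ge 1$, so $\sum_jt_j=d'$. If $t_j\le d'-d$, then $C_j$ has no path of length $2k$ starting at a neighbour of $v$ (prefixing such a path with $v$ gives a path of length $2k+1$ from $v$ using $\le t_j\le d'-d$ of its neighbours, hence $\ge d$ free: a broom). As $C_j$ is connected, joining a neighbour of $v$ to any path $Q$ of $C_j$ by a shortest path and following the longer half of $Q$ yields a path of length $\ge|Q|/2$ from that neighbour; so $C_j$ has no path of length $\ge 4k$, and Theorem~\ref{th:main} bounds $e(C_j)$ by the bipartite Tur\'an number of $P_{4k+1}$ (small components being bounded trivially). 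The at most $d'/(d'-d+1)$ components with $t_j\ge d'-d+1$ must be treated separately: here one shows that such a component cannot be simultaneously large, dense, and $S_{2k+1,d}$-free together with $v$, using $n\ge d^2/2$ and $d>2k$. Summing these edge bounds with $\deg(v)=d'$ gives $e(G)\le nd$, contradicting $e(G)\ge nd+1$.

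The crux --- and the step I expect to be hardest --- is this final edge count. The plain estimate ``each component of $G-v$ is $P_{4k+1}$-free'' is far too lossy when $d$ is close to $2k$ (it only yields $e(G)\lesssim 2kn$ or worse), so one must squeeze more out of the no-broom hypothesis: e.g.\ a vertex of degree $\ge d+1$ sitting deep inside a component $C_j$ would itself centre a broom, via the path back to $v$ extended by one of its $\ge d$ further neighbours, which forces all high-degree vertices of $C_j$ to cluster near its attachment to $v$. Organising these local constraints into a global count that lands exactly on $nd$ --- rather than a constant-times-$n$ overestimate --- and in particular handling the components carrying several neighbours of $v$, is where $n\ge d^2/2$ is essential and where the real difficulty lies.
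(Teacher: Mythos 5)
Your lower bound is correct and matches the paper's construction, and your preliminary observations (a vertex of degree $\geq d+1$ whose BFS tree reaches depth $2k+1$ yields a broom; existence of a long path via Theorem~\ref{th:main}) are sound. But the upper bound, which is the substance of the theorem, is only a plan with a gap you yourself flag as unresolved: the final edge count over the components of $G-v$, and in particular the components containing more than $d'-d$ neighbours of $v$, for which you offer no argument at all ("one shows that such a component cannot be simultaneously large, dense, and $S_{2k+1,d}$-free" is not a proof). Moreover the bound you do have for the small-$t_j$ components is quantitatively insufficient, as you note: $P_{4k+1}$-freeness gives roughly $(2k-1)|V(C_j)|$ edges per component, which summed is about $(4k-2)n$ and exceeds $nd$ whenever $d<4k-2$ (e.g.\ $d=2k+1$), so the plan as written cannot land on $nd$. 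This is a missing idea, not a missing detail.

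The paper closes exactly this step with a different decomposition. Take $x$ of maximum degree (so $d(x)\geq d+1$, and by Lemma~\ref{lem:lem2} every vertex has degree at most $k+d-1$ once $|E(G)|>nd\geq 2kn$), and delete $x$ \emph{together with} $d+1$ of its neighbours. Then every component of the remainder is attached to the deleted set, and hence contains a vertex that is not the endpoint of any path on $2k$ vertices inside its component: such a path could be prefixed through the deleted set and decorated with the untouched neighbours of $x$ to form $S_{2k+1,d*1}$. The key quantitative gain is Lemma~\ref{lem:all_path2}: a connected bipartite graph with \emph{one} vertex not starting a path on $2k$ vertices has at most $(k-1)|V|$ edges (proved by repeatedly peeling off the endpoint of a maximum path, which has degree at most $k-1$) --- coefficient $k-1$, not $\sim 2k$, which is precisely what your sketch lacks. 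Summing $(k-1)(2n-d-2)$ over the components plus at most $(d+2)(k+d-1)-(d+1)$ edges at the $d+2$ deleted vertices gives $|E(G)|\leq 2kn-2n+d^2+d-1\leq 2kn+d-1<(2k+1)n\leq nd$, using $n\geq d^2/2$ and $d\geq 2k+1$ --- a contradiction, with no case distinction on how the neighbours of the hub distribute among components.
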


We also provide an upper bound when the star is at most half the size of the path.
\begin{restatable}{theorem}{thmBroomsTwo}\label{th:d<n}
    For every $k> d \geq 2$ and for every $n$, we have $ex_{b,c}(n,n,S_{2k+1,d*1}) \leq 2nk + 1$.
\end{restatable}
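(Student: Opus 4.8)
The plan is to dichotomise on whether $G$ contains a path on $4k$ vertices. If $G$ has no $P_{4k}$, then $G$ is a connected bipartite $P_{4k}$-free graph with both sides of size $n$, so Theorem~\ref{th:main} applied with $2k$ in the role of ``$k$'' --- legitimate since $k>d\ge 2$ gives $k\ge 3$ hence $2k\ge 6$, and we may assume $n\ge 2k$, the case $n<2k$ being trivial as then $e(G)\le n^2\le 2kn$ --- yields $e(G)\le ex_{b,c}(n,n,P_{4k})=(2k-2)(n-1)+n=2kn-n-2k+2\le 2kn+1$. So from now on assume $G$ contains a path $P=v_0v_1\cdots v_{4k-1}$ on exactly $4k$ vertices (truncate a longer one if necessary).

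The key lemma I would isolate is a degree bound for endpoints of medium-length paths: if $x$ is an endpoint of a path $Q$ on exactly $2k+2$ vertices in $G$, then $\deg_G(x)\le k+d$. This is where the hypothesis $k>d$ is essential. First, $x$ has at most $d-1$ neighbours outside $V(Q)$: otherwise $Q$ together with $d$ pendant edges from $x$ to $d$ such neighbours would be a copy of $S_{2k+1,d*1}$. Second, since $G$ is bipartite and $Q$ is a path on $2k+2$ vertices, $Q$ has exactly $k+1$ vertices on each side of the bipartition, so $x$ has at most $k+1$ neighbours on $V(Q)$. Hence $\deg_G(x)\le (k+1)+(d-1)=k+d\le 2k-1$.

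Next I would show that every vertex of $G$ other than $v_{2k-1}$ and $v_{2k}$ is an endpoint of some path on $2k+2$ vertices, and hence has degree at most $2k-1$ by the lemma. Suppose instead $v\notin\{v_{2k-1},v_{2k}\}$ has $\deg_G(v)\ge 2k$; by the lemma, $v$ is then the endpoint of no path on $2k+2$ (or more) vertices. If $v=v_i\in V(P)$, the two subpaths of $P$ ending at $v_i$ have $4k-i$ and $i+1$ vertices, forcing $4k-i\le 2k+1$ and $i+1\le 2k+1$, i.e.\ $i\in\{2k-1,2k\}$ --- a contradiction. If $v\notin V(P)$, take a shortest path $u_0=v,u_1,\dots,u_s=v_i$ from $v$ to $V(P)$, so that $s\ge 1$ and $u_0,\dots,u_{s-1}\notin V(P)$; prepending it to either of the two subpaths of $P$ ending at $v_i$ gives paths ending at $v$ with $s+(4k-i)$ and $s+(i+1)$ vertices, forcing $s+4k-i\le 2k+1$ and $s+i+1\le 2k+1$, whence $2s\le 1$ --- impossible. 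Summing degrees then gives $2e(G)=\sum_v\deg_G(v)\le(2n-2)(2k-1)+2n=4kn-4k+2$, so in fact $e(G)\le 2kn-2k+1\le 2kn+1$.

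The main obstacle is conceptual rather than computational: recognising that $4k$ is the right path length to search for, so that the medium-path endpoint bound propagates to all but two vertices, and pinning down exactly which two escape control. The real content is the saving of a factor of roughly $2$ in the endpoint degree bound furnished by the bipartition together with $d<k$; the remainder is bookkeeping, plus the routine check of the parameter ranges ($k\ge 3$, and the reduction to $n\ge 2k$) needed to invoke Theorem~\ref{th:main} with parameter $2k$.
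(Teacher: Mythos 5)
Your proof is correct, but it follows a genuinely different route from the paper's. The paper derives the theorem in two lines from Lemma~\ref{lem:lem2}: if $|E(G)|\geq 2nk+1$, then Lemma~\ref{lem:all_path2} (an inductive edge count showing that a vertex which is not the endpoint of a path on $2k+2$ vertices forces $|E(G)|\leq 2kn$) guarantees that \emph{every} vertex is the endpoint of a path on $2k+1$ vertices, hence has degree at most $k+d-1\leq 2k-2$ by exactly the broom-plus-bipartiteness observation you isolate as your key lemma; summing degrees over one side then gives $|E(G)|\leq n(k+d-1)\leq 2n(k-1)$, a contradiction. You instead split on whether $G$ contains $P_{4k}$: in the $P_{4k}$-free branch you invoke Theorem~\ref{th:main} with parameter $2k$ (legitimate, since that theorem is proved independently of the broom section, and your checks $2k\geq 3$ and the reduction to $n\geq 2k$ are right), while in the other branch you use an explicit $4k$-vertex path to show that every vertex except $v_{2k-1},v_{2k}$ is the endpoint of a path on $2k+2$ vertices, hence has degree at most $k+d\leq 2k-1$, after which the degree sum closes the case. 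Both arguments share the same local engine---an endpoint of a long path has at most $d-1$ neighbours off the path and at most about $k$ on it---and differ only in how they certify that (almost) all vertices are such endpoints: the paper's counting lemma does this uniformly, with no exceptional vertices and no appeal to the path theorem, giving a shorter, self-contained proof; your dichotomy imports the full strength of Theorem~\ref{th:main} for one branch and handles two exceptional middle vertices in the other, but it works and even yields the slightly stronger bounds $2kn-n-2k+2$ and $2kn-2k+1$ in the respective branches. The only cosmetic slip is that $Q$ together with the $d$ pendant edges at $x$ \emph{contains} a copy of $S_{2k+1,d*1}$ rather than being one; this changes nothing.
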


When discussing those results, we heard that He, Salia and Zhu independently made similar progress in a follow-up to~\cite{caro2025bipartite}. Their paper~\cite{salia} should be available on arXiv on the same day as this one.


\section{Lower bound of Theorem~\ref{th:main}}\label{sec:lowermain}

We first prove the lower bound, with Proposition~\ref{th:borneinf_general}.

\begin{proposition}\label{th:borneinf_general}
    For every $k\geq 3$ and every $b\geq a\geq k$, we have $ex_{b,c}(a,b,P_{2k})\geq ex_{b,c}(a,b,P_{2k-1}) \geq (k-2)(b-1) - a$.
\end{proposition}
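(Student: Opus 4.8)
The plan is to prove the (only nontrivial) second inequality by an explicit construction, the first inequality $ex_{b,c}(a,b,P_{2k})\ge ex_{b,c}(a,b,P_{2k-1})$ being immediate since $P_{2k-1}$ is a subgraph of $P_{2k}$. I would build a connected bipartite graph $G$ with parts $A$ and $B$ of sizes $a$ and $b$ that is $P_{2k-1}$-free and has exactly $(k-2)(b-1)+a$ edges (in particular more than $(k-2)(b-1)-a$). Fix $k-2$ ``hub'' vertices $x_1,\dots,x_{k-2}\in A$, join each of them to all of $B$, then fix one vertex $v\in B$ and join it to each of the remaining $a-(k-2)$ vertices of $A$. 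Since $a\ge k$, we have $a-(k-2)\ge 2$, so this makes sense; $G$ is connected because $v$ sees every vertex of $A$ and every vertex of $B$ sees $x_1$; and the number of edges is $(k-2)b+\bigl(a-(k-2)\bigr)=(k-2)(b-1)+a$.

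The substantive step is to show $G$ has no path on $2k-1$ vertices. The key point is that the $a-(k-2)$ non-hub vertices of $A$ all have degree $1$ with common neighbour $v$, so a path of $G$ contains at most one of them, and only as an endpoint. Deleting these degree-$1$ vertices leaves the complete bipartite graph $K_{k-2,b}$, whose longest path uses all $k-2$ vertices of the small side (recall $b\ge k>k-2$) and hence has $2(k-2)+1=2k-3$ vertices. Consequently every path of $G$ falls into one of three cases: it lies inside $K_{k-2,b}$ (at most $2k-3$ vertices); it is a path of $K_{k-2,b}$ with endpoint $v$ prolonged by one non-hub vertex (at most $2k-2$ vertices); or it is the three-vertex path formed by two non-hub vertices and $v$. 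For $k\ge 3$ all three have at most $2k-2<2k-1$ vertices, so $G$ is $P_{2k-1}$-free, completing the proof.

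I do not foresee a real difficulty: this is the ``obvious'' extremal construction and the longest-path estimate is a short bipartite counting argument. The only thing to watch is the bookkeeping for small parameters — the cases $a=k$, $b=a=k$, and $k=3$ (where there is a single hub, so $G$ is a star centred in $A$ together with $a-1$ leaves attached to one vertex of $B$) — which should be checked against Proposition~\ref{pr:Caro}: for $k=3$ and $a=b=n$ one gets $1\cdot(n-1)+n=2n-1$ edges and longest path $P_4$, as expected.
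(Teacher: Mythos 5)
Your construction is exactly the paper's (a $K_{k-2,b}$ with the $k-2$ hubs in the $a$-side, plus $a-(k-2)$ pendant vertices attached to a single vertex of $B$), with the same edge count and essentially the same path-freeness argument via deleting the degree-one vertices; your case analysis is correct (and even fixes the small slip "at most one of them" via the three-vertex case). So the proposal is correct and takes the same approach as the paper, in fact proving the intended bound $(k-2)(b-1)+a$ rather than the weaker $(k-2)(b-1)-a$ stated.
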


\begin{proof}
   
    The first inequality is immediate: we focus on the second one and exhibit a construction.
    Let $a ,b,k \in \mathbb{N^*}$ such that $b \geq a \geq k \geq 3$. 
    We build a connected bipartite graph $G$ of part sizes a and b as follows (see also Figure~\ref{fig:lowerbound}). 
    
    Informally, we consider a complete bipartite graph $K_{k-2,b}$ and add $a-(k-2)$ leaves adjacent to one vertex in a way that matches the expected sizes of the parts. More formally, we let $A = \{u_1,\ldots,u_a\}$ and $B = \{v_1,\ldots,v_b\}$ be the two sides of the bipartition of $G$. For every $i,j \in [1,b]\times[1,k-2]$, we set $u_iv_j \in E(G)$ and for every $j \in [k-1,a]$, we set $u_nv_j \in E(G)$.
    
    Note that $|E(G)| = b(k-2) + a - (k-2) = (b-1)(k-2) + a$.    Furthermore, $G$ is $P_{2k-1}$-free. 
     Indeed, if we take $G' = G\backslash \{v_{k-1},\dots,v_n\}$, the graph $G'$ is the complete bipartite graph $K_{k-2,b}$ and any longest path of this graph has $2k - 3$ vertices. However, only $u_n$ has neighbours in $G$ that are not in $G'$, and they all have degree $1$ in $G$.\end{proof}

     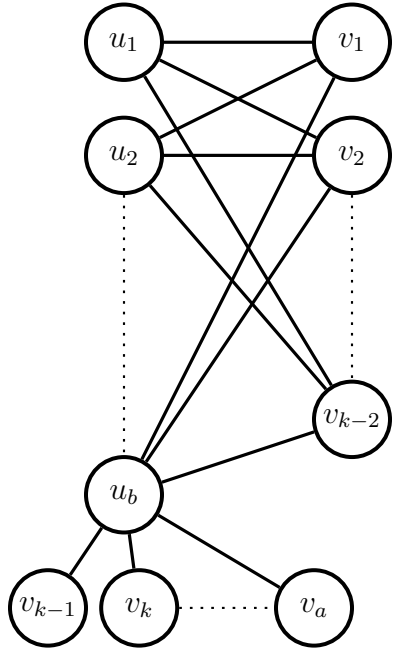
\begin{figure}
\begin{center}
        \begin{tikzpicture}
          \tikzset{sommet/.style={circle, draw, minimum size=1cm, line width=1.5pt, inner sep=0pt}}
          \node[sommet] (u_1) at (1,7.5) {$u_1$};
          \node[sommet] (u_2) at (1,6) {$u_2$};
          \node[sommet] (u_b) at (1,1.5) {$u_b$};
        
           \node[sommet] (v_1) at (4,7.5) {$v_1$};
          \node[sommet] (v_2) at (4,6) {$v_2$};
          \node[sommet] (v_k-2) at (4,2.5) {$v_{k-2}$};
        
           \node[sommet] (v_k-1) at (0,0) {$v_{k-1}$};
          \node[sommet] (v_k) at (1.2,0) {$v_k$};
          \node[sommet] (v_a) at (3.5,0) {$v_a$};
        
          \draw[thick,dash pattern=on 1 pt off 3pt] (u_2) -- (u_b);
          \draw[thick,dash pattern=on 1 pt off 3pt] (v_2) -- (v_k-2);
          \draw[thick,dash pattern=on 1 pt off 3pt] (v_k) -- (v_a);
          
          \draw[very thick] (u_1) -- (v_1);
          \draw[very thick] (u_1) -- (v_2);
          \draw[very thick] (u_2) -- (v_1);
          \draw[very thick] (u_2) -- (v_2);
          \draw[very thick] (u_1) -- (v_k-2);
          \draw[very thick] (u_2) -- (v_k-2);
          \draw[very thick] (u_b) -- (v_k-2);
          \draw[very thick] (u_b) -- (v_1);
          \draw[very thick] (u_b) -- (v_2);
        
          \draw[very thick] (u_b) -- (v_k-1);
          \draw[very thick] (u_b) -- (v_k);
          \draw[very thick] (u_b) -- (v_a);
          
    \end{tikzpicture}
    \end{center}
    \caption{The construction in Proposition~\ref{th:borneinf_general}.}
    \label{fig:lowerbound}
\end{figure}

Note that for $a=b=n$ we have $|E(G)| = (n-1)(k-2) + n = n(k-1) - (k-2)$, thus $ex_{b,c}(n,n,P_{2k})\geq ex_{b,c}(n,n,P_{2k-1}) \geq (k-1)n - (k-2)$.

\section{Upper bound of Theorem~\ref{th:main}}\label{sec:uppermain}

We start with the statement and the high-level arguments. The lower-level arguments are presented in Section~\ref{subsec:details}.

\begin{theorem}\label{th:bornesup_general}
    For every $k\geq 3$ and every $b\geq a\geq k$, we have $ex_{b,c}(a,b,P_{2k-1}) \leq ex_{b,c}(a,b,P_{2k}) \leq (k-2)(b-1) + a$.
\end{theorem}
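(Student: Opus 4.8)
The plan is to prove the upper bound $ex_{b,c}(a,b,P_{2k}) \le (k-2)(b-1)+a$ by induction on $a+b$ (or on $k$), exploiting connectivity and the absence of a long path. The first reduction: since a $P_{2k}$ has $2k$ vertices and any $P_{2k-1}$ is a subgraph of a $P_{2k}$, it suffices to bound $P_{2k}$-free graphs, so the first inequality is free. Now let $G$ be a connected bipartite graph with parts $A$ (size $a$) and $B$ (size $b\ge a$) that is $P_{2k}$-free with the maximum number of edges. The natural handle is a \emph{longest path} $P = x_0 x_1 \cdots x_\ell$ in $G$, which has $\ell+1 \le 2k-1$ vertices. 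Standard Gallai–type arguments show that all neighbours of the endpoints $x_0, x_\ell$ lie on $P$, and more generally one can run a rotation/DFS argument: taking a DFS tree rooted appropriately, non-tree edges go between ancestors and descendants, and the depth of the tree is at most $2k-2$, so $G$ has ``bounded path-depth.''

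The heart of the argument will be to isolate the structure forced by extremality. I expect that in an extremal graph one wants a small ``core'' (a complete bipartite-ish piece $K_{k-2,b}$ using all of $B$) together with a large set of pendant vertices (leaves) in $A$ attached to a single hub vertex of $B$, exactly mirroring the lower-bound construction of Proposition~\ref{th:borneinf_general}. So the strategy is: (i) show that if some vertex $v \in B$ has ``small'' degree then we can delete it and its pendant structure and apply induction (removing a vertex from $B$ decreases the target bound by exactly $k-2$, so we need to show $v$ together with the vertices that become only attached through it account for at most $k-2$ edges plus the per-$A$-vertex contribution); (ii) otherwise, every vertex of $B$ has degree $\ge$ something, and then a counting argument on the longest path together with the fact that there is no $P_{2k}$ caps the total edge count. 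Alternatively, peel off a leaf $u \in A$ (a degree-$1$ vertex of $A$): deleting it decreases $a$ by $1$ and the bound by exactly $1$, and if $G$ has no leaf in $A$ then every vertex of $A$ has degree $\ge 2$, which should be combined with the long-path obstruction.

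More concretely, I would set up the induction so that the base case is $k=3$: there $P_{2k}=P_6$, $P_{2k-1}=P_5$, and we must show $ex_{b,c}(a,b,P_6) \le (b-1) + a = a+b-1$; a connected bipartite $P_6$-free graph is essentially a ``double star'' / subdivided star of small radius, and one shows $|E| \le a+b-1$ directly (indeed a connected $P_5$-free graph has a dominating vertex-or-edge structure forcing near-tree edge count, and Proposition~\ref{pr:Caro} handles the balanced case as a sanity check). For the inductive step with $k \ge 4$: consider the block-cut structure or a longest path $P$; let $C$ be a connected component of $G - V(P)$ — actually since $G$ is connected, $G-V(P)$ may be empty or its components attach to internal vertices of $P$, and any such attached tree has small depth (else we extend or reroute $P$ to get $P_{2k}$). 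The key deletion: find a vertex $w$ (in $B$, ideally) whose removal keeps $G$ connected (e.g. a non-cut vertex, or handle cut vertices via the smallest block) and such that $\deg(w) \le k-1$ when $w\in B$; delete it and the now-degree-$0$ vertices, verify the new graph is still connected bipartite and $P_{2k}$-free with parts $(a', b-1)$ where $a' = a - (\text{leaves killed})$, and check the arithmetic $(k-2)(b-2)+a' + \deg(w) + (\text{something}) \le (k-2)(b-1)+a$, i.e. we can afford to delete up to $k-2$ edges per removed $B$-vertex beyond the pendant bookkeeping.

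The main obstacle I anticipate is exactly this case analysis around cut vertices and low-degree vertices: making sure that in an extremal $P_{2k}$-free connected bipartite graph one can \emph{always} find either a leaf in $A$ to peel (contributing the $+a$ term cleanly) or a vertex in $B$ of degree at most $k-2$ whose deletion does not destroy connectivity and does not create a long path — and simultaneously accounting for the leaves that die when we remove such a $B$-vertex. The delicate arithmetic is that removing one $B$-vertex should cost the bound exactly $k-2$, while removing one $A$-leaf costs exactly $1$; any configuration where a $B$-vertex has degree $\ge k-1$ but removing it would kill several $A$-vertices needs a compensating long-path argument to rule it out. I expect the low-level Section~\ref{subsec:details} to be devoted precisely to this balancing act, likely via a carefully chosen extremal vertex (e.g. an endpoint of a longest path, or a leaf of a DFS tree) together with a rotation argument showing its neighbourhood is constrained.
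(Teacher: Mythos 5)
Your overall plan (peel a low-degree non-cut vertex found via a longest path/DFS argument and induct, with the deleted $B$-vertex costing $k-2$ and a deleted $A$-vertex costing $1$) matches the paper's strategy in the unbalanced case, but it has a genuine gap exactly where the paper needs its second key lemma: the balanced case $a=b$. When $a=b=n$ and you delete one vertex of $B$, the \emph{other} side becomes the larger one, so the target bound drops from $(k-2)(n-1)+n$ to $(k-2)(n-1)+(n-1)$, i.e. by only $1$, not by $k-2$. Hence in a balanced graph you can only afford to remove a single $B$-vertex if it has degree at most $1$, and your fallback of ``peel a leaf in $A$'' is not guaranteed to exist; your ``$+$ (something)'' in the arithmetic is precisely this unresolved case. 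The paper resolves it by deleting \emph{two} vertices at once, one on each side, with $d(x)+d(y)\leq k-1$ and connectivity preserved (\Cref{lem:inductionBalanced}), which keeps the graph balanced and makes the accounting close. Relatedly, your budget ``$\deg(w)\leq k-1$ when $w\in B$'' is off by one even in the unbalanced case: one needs degree at most $k-2$ (\Cref{lem:inductionUnbalanced}), and guaranteeing such a vertex that is also not a cut-vertex is a real argument (a DFS-leaf in $B$ plus a rotation argument), not just bookkeeping.

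A second gap is the base case. Your induction decreases $a+b$ for fixed $k$, so it bottoms out at $a=b=k$, not at $k=3$; the case $a=b=k$ for general $k$ is not covered by your ``$k=3$ double-star'' discussion and is genuinely delicate. The paper devotes \Cref{lem:Casdebase} to it, proving $|E(G)|\leq (k-1)^2+1$ by a separate induction on $k$: it finds a low-degree non-cut vertex $x\in A$ and a high-degree non-cut vertex $y_1\in B$, deletes both, and shows the resulting balanced graph on $2k-2$ vertices is $P_{2k-2}$-free (via a Hamiltonian-path reinsertion argument), which is the mechanism that lets $k$ decrease. Your sketch contains no step that reduces $k$, and your proposed $k=3$ base case (all $a,b$) is itself only asserted, not proved; as written the induction does not close.
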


Since $ex_{b,c}(a,b,P_{2k-1}) \leq ex_{b,c}(a,b,P_{2k})$ follows from the definition, we focus on the second part, i.e. $ex_{b,c}(a,b,P_{2k}) \leq (k-2)(b-1) + a$. We first need the following ingredients.

Lemma~\ref{lem:inductionUnbalanced} guarantees the existence of a vertex of reasonably small degree in the largest side of the bipartition, with the extra property that deleting this vertex does not disconnect the graph.

\begin{lemma}\label{lem:inductionUnbalanced}
    For every $k \geq 3$, for every connected bipartite $P_{2k}$-free graph $G$ with bipartition $(A,B)$ such that $|B| \geq |A| \geq k$, there exists a vertex $x \in B$ such that $d(x) \leq k-2$ and $G\setminus \{x\}$ is connected.
\end{lemma}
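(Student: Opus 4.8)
The plan is to look at a longest path $P$ in $G$ and analyze what happens near its endpoints, combined with a block-tree (2-connectivity) argument to handle the "does not disconnect" requirement. Since $G$ is $P_{2k}$-free, every path has at most $2k-1$ vertices, so a longest path $P = p_1 p_2 \cdots p_\ell$ has $\ell \leq 2k-1$. The standard observation is that both endpoints $p_1, p_\ell$ have all their neighbours on $P$ (otherwise $P$ extends), and moreover, by the usual rotation argument, if $p_1$ is adjacent to $p_{i+1}$ then $p_i$ is not adjacent to $p_\ell$ — this bounds $d(p_1) + d(p_\ell)$ by roughly $\ell$. The first idea is therefore: one of $p_1, p_\ell$ lies in $B$ (or we can arrange this; if both endpoints are in $A$ we argue separately using that $|B|\ge|A|$ and an averaging/counting step forcing a low-degree vertex of $B$), and that endpoint has degree at most $\lceil (\ell-1)/2 \rceil \leq k-1$. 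Getting down to $k-2$ rather than $k-1$ requires squeezing the rotation argument a bit harder, using parity: on a longest path in a bipartite graph the two endpoints are in the same part iff $\ell$ is odd, and a careful count of which $p_i$ can be neighbours of a given endpoint, together with the bipartite structure, should save the extra $1$.

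Next, to ensure $G \setminus \{x\}$ stays connected, I would work with the block-cut tree of $G$. Consider a leaf block $L$ of this tree (a block containing at most one cut vertex). Every vertex of $L$ except possibly its unique cut vertex is a non-cut vertex of $G$, hence deleting it keeps $G$ connected. So it suffices to find, inside some leaf block $L$, a vertex $x \in B \setminus \{\text{cut vertex}\}$ with $d_G(x) \leq k-2$. If $L$ is a single edge (i.e. $x$ is a pendant vertex), then $d(x) = 1 \leq k-2$ and we need $x$ to land in $B$; if every pendant vertex is in $A$ we again fall back to counting. If $L$ is 2-connected, then $L$ is itself a connected bipartite graph, and since $G$ is $P_{2k}$-free so is $L$; a longest path of $L$ has both endpoints with all neighbours (within $L$) on the path, and because $L$ is 2-connected these endpoints are not cut vertices of $G$ unless $L$ is the unique block — but more importantly, in a 2-connected graph a longest path's endpoints have degree constraints coming from the Erdős–Gallai/rotation machinery: one shows the endpoints have small degree *in $G$*, using that a neighbour of an endpoint outside $L$ would either enlarge the path or contradict maximality via a detour through $L$. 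Picking the endpoint of this path that lies in $B$ gives the desired $x$.

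The step I expect to be the main obstacle is the bookkeeping that simultaneously (a) guarantees the low-degree vertex lies in the \emph{large} side $B$ — this is where the hypothesis $|B| \ge |A| \ge k$ must be used, presumably via a global edge/degree count showing $B$ cannot have all its vertices of degree $\ge k-1$ while $G$ is $P_{2k}$-free, since that would already push the edge count past $(k-2)(b-1)+a$ or create a long path — and (b) guarantees that same vertex is a non-cut vertex, i.e. that the low-degree vertex of $B$ produced by the path argument can be chosen inside a leaf block. Reconciling "low degree forces it into $B$" with "non-cut forces it into a leaf block" is the delicate part: one natural route is to induct or to first pass to a leaf block $L$, observe $L$ is connected bipartite $P_{2k}$-free with parts $A_L \subseteq A$, $B_L \subseteq B$, and then either $|B_L| \ge |A_L|$ (apply the path/counting argument inside $L$) or $|B_L| < |A_L|$ (then $L$ is "$A$-heavy" and small, and one argues directly that it contains a pendant-like or degree-$\le k-2$ vertex, or that such a block cannot exist given $|B|\ge|A|$ globally). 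I would expect the final write-up to dispatch the 2-connected case and the tree-like/leaf-edge case separately, with the counting argument invoking $|A|\ge k$ to rule out the degenerate possibilities.
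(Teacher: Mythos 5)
There is a genuine gap: your write-up is a plan whose hard steps are exactly the ones left open, and the two fallback ideas you sketch for them do not work as stated. First, the ``save the extra $1$ by parity'' hope is false as a standalone step: parity of a longest (or maximal) path only gives an endpoint degree bound of $k-1$, and degree exactly $k-1$ genuinely occurs (the extremal configuration is a cycle on $2k-2$ vertices with further vertices attached into it), so no amount of careful counting on the same endpoint gets you to $k-2$. The paper's proof has to do something structural at this point: when the candidate vertex $b$ has degree exactly $k-1$, its neighbourhood is all of $V(P)\cap A$ and a cycle of length $2k-2$ appears; one then picks a \emph{different} vertex $b'\in B$ outside that cycle, uses the hypothesis $|B|\geq|A|\geq k$ to guarantee both $b'$ and some $a'\in A$ outside the cycle exist, and shows $d(b')\geq k-1$ would create a $2k$-vertex path through the cycle. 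Your proposal never identifies this case, let alone resolves it. Second, your mechanism for forcing the low-degree vertex into the large side $B$ is an ``averaging/counting step'' that you justify by saying many high-degree $B$-vertices ``would already push the edge count past $(k-2)(b-1)+a$''. That is circular: this lemma is precisely an ingredient of the induction that establishes the bound $(k-2)(b-1)+a$, so you cannot invoke that bound here; and even granting it, a vertex produced by averaging comes with no guarantee of being a non-cut vertex, which is the other half of the statement.

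For comparison, the paper avoids both difficulties with one device you did not consider: a DFS tree rooted at a vertex of $A$, together with the elementary counting statement \Cref{cl:Bleaf} guaranteeing that such a tree has a leaf $b\in B$ whenever $|B|\geq|A|$. A DFS leaf simultaneously (i) is a non-cut vertex of $G$ (the tree minus a leaf still spans $G\setminus\{b\}$), and (ii) has all its neighbours on its root-to-leaf path, so bipartiteness and $P_{2k}$-freeness give $d(b)\leq k-1$ at once, with the tight case handled as described above. Your block--cut-tree machinery is both heavier than needed (endpoints of maximal paths and DFS leaves are automatically non-cut) and does not address the real sticking points, namely landing in $B$ and improving $k-1$ to $k-2$.
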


Lemma~\ref{lem:inductionBalanced} guarantees that any large enough connected balanced bipartite graph contains two ``irrelevant'' vertices. Intuitively, those two vertices can safely be deleted for induction purposes.

\begin{lemma}\label{lem:inductionBalanced}  
    For every $n \geq k \geq 3$, for every connected balanced bipartite graph $G$ of size $2n$ with no $P_{2k}$, there exist two vertices $x, y$ such that $d(x)+d(y) \leq k-1$ and $G \setminus \{x,y\}$ is connected and balanced.
\end{lemma}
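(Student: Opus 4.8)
The plan is to leverage Lemma~\ref{lem:inductionUnbalanced}, with the caveat that a naive double application is too weak: deleting a vertex $y\in B$ of degree $\le k-2$ makes the graph unbalanced, a second application inside $G-y$ produces $x\in A$ with $d_G(x)\le k-1$, and the resulting sum $d(x)+d(y)\le 2k-3$ misses the target $k-1$ by almost a factor of two. So the point will be to ``spend'' one genuinely small-degree vertex and keep a full budget of $k-2$ for the other side. I would first observe that the \emph{connectivity} requirement alone is harmless: a balanced bipartite tree has average degree $(2n-1)/n<2$ in each part, hence a leaf in $A$ and a leaf in $B$, and deleting two leaves of a spanning tree lying in different parts (they are non-adjacent once $2n\ge 6$) leaves a spanning tree; thus any spanning tree already furnishes a pair $x\in A$, $y\in B$ with $G\setminus\{x,y\}$ connected and balanced. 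The whole difficulty is to find such a pair with $d(x)+d(y)\le k-1$.

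\textbf{Case 1: $G$ has a leaf.} Suppose $z\in A$ is a leaf with neighbour $w\in B$. Then $G-z$ is connected with parts of sizes $n-1$ and $n$, so (provided $n\ge k+1$) Lemma~\ref{lem:inductionUnbalanced} applied to $G-z$ yields $y\in B$ with $d_{G-z}(y)\le k-2$ and $(G-z)-y$ connected. If $y\ne w$ then $d_G(y)=d_{G-z}(y)\le k-2$, so $x:=z$ together with $y$ is the desired pair. If $y=w$, then $d_G(w)\le k-1$; I would resolve this sub-case by either extracting a different valid candidate in $B\setminus\{w\}$ of degree $\le k-2$ that is not a cut-vertex of $G-z$, or, failing that, showing that the forced degree/connectivity pattern around $w$ already produces a $P_{2k}$ (a short ad hoc path-building argument using $|A|,|B|\ge k$). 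The remaining configuration $n=k$ would be treated by hand: then $G$ has only $2k$ vertices, and $P_{2k}$-freeness means $G$ has no Hamiltonian path, which is very restrictive.

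\textbf{Case 2: $\delta(G)\ge 2$.} For $k\le 4$ this case turns out to be vacuous — in a bipartite graph with all degrees at least $2$ one can build long alternating paths, so a connected balanced bipartite $P_{2k}$-free graph on at least $2k$ vertices must after all contain a leaf, returning us to Case~1. For $k\ge 5$ I would analyse a longest path $P=p_1\cdots p_\ell$; since $G$ is $P_{2k}$-free, $\ell\le 2k-1$, and by maximality $N(p_1),N(p_\ell)\subseteq V(P)$. Rotating about each endpoint (Pósa-style) produces sets $L\ni p_1$ and $R\ni p_\ell$ of alternative endpoints, each of whose members has all neighbours on $P$; one then selects suitable $x\in A$ and $y\in B$ among the vertices so obtained, avoiding the at most $\ell$ cut-vertices of $G$, so that $G\setminus\{x,y\}$ is connected and balanced, the parity issue (when every longest path joins two vertices of the same part) needing separate handling. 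For the degree bound one counts admissible positions of $N(x)$ and $N(y)$ on $P$: these two neighbourhoods cannot jointly cover much more than half of $P$, for otherwise a splicing argument yields a path on $\ge 2k$ vertices or a spanning cycle of $G$, the latter impossible since $\ell<2n$. This is what pushes $d(x)+d(y)$ down to $k-1$.

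\textbf{Main obstacle.} The heart of the matter is this last estimate in Case~2: easy arguments only bound $d(x)+d(y)$ by something of order $2k$, and obtaining the sharp value $k-1$ forces one to use $P_{2k}$-freeness (through the rotation/splicing structure) \emph{together with} the size hypothesis $n\ge k$ (to rule out Hamiltonian-type configurations), all while keeping $x$ and $y$ in opposite parts and off the cut-vertices. The remaining, more technical, points are the boundary case $y=w$ in Case~1, the parity issue in Case~2, and the by-hand treatment of $n=k$.
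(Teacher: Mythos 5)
Your proposal is an outline rather than a proof: the steps you defer are exactly the ones that carry the difficulty, and none of them is actually carried out. Concretely: (1) in Case~1 the subcase $y=w$ only gives $d(z)+d(w)\leq 1+(k-1)=k$, one more than allowed, and your fix (``extract a different candidate, or show the pattern around $w$ forces a $P_{2k}$'') is not an argument — there is no reason a second non-cut vertex of $B$ of degree $\leq k-2$ exists in $G-z$, since Lemma~\ref{lem:inductionUnbalanced} only promises one such vertex; (2) the base case $n=k$ is waved away (``very restrictive''), yet it needs the full statement and is not easier than the general case; (3) in Case~2 the claim that $\delta(G)\geq 2$ is impossible for $k\leq 4$ is unproven, and, most importantly, the sharp bound $d(x)+d(y)\leq k-1$ for $k\geq 5$ is only asserted: you yourself note that the easy rotation/splicing counts give a bound of order $2k$, and you do not explain how the Pósa-type sets $L,R$ yield two vertices in \emph{opposite} parts, off the cut-vertices, with total degree $\leq k-1$. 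The parity problem you mention (every longest path may join two vertices of the same part) is precisely the hard configuration, and it is left ``needing separate handling.''

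For comparison, the paper avoids all of this with one longest path $P=u_1\dots u_p$ (so $N(u_1),N(u_p)\subseteq V(P)$ and deleting endpoints never disconnects). If $u_1,u_p$ lie in different parts, a pigeonhole argument shows $d(u_1)+d(u_p)\geq k$ would force a crossing pair $u_1u_{2j}, u_pu_{2j-1}$, hence a cycle on $V(P)$, hence (by connectivity and $2n>p$) a longer path — so $x=u_1,y=u_p$ works. If $u_1,u_p$ are in the same part, the paper runs a DFS in which $P$ is visited first; Claim~\ref{cl:Bleaf} gives a leaf $b\in B$ of the DFS tree, the DFS property confines $N(b)$ to an initial segment $u_1,\dots,u_j$ of $P$ plus the tree path $P_b$, and a counting argument over ``blocks'' $(u_{2i-1},u_{2i})$ shows $d(u_1)+d(b)\geq k$ would create a good block and hence a path longer than $P$; connectivity and balance of $G\setminus\{u_1,b\}$ are immediate because $b$ is a leaf of the spanning DFS tree, $u_1$ is its root with a single child, and $u_1\in A$, $b\in B$. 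That counting step is the ingredient your sketch is missing.
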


We then argue that the theorem holds in the base case (i.e. $b=a=k$).

\begin{lemma}\label{lem:Casdebase}
    For every $k \geq 3$, for every connected bipartite $P_{2k}$-free graph $G$ with bipartition $(A,B)$ such that $|B|= |A| = k$, we have $|E(G)|\leq (k-1)^2 + 1$.
\end{lemma}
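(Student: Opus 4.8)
The plan is to bound the number of edges in a connected, balanced bipartite graph $G$ on parts $A,B$ with $|A|=|B|=k$ that contains no $P_{2k}$. The total number of possible edges is $k^2 = (k-1)^2 + 2(k-1) + 1 = (k-1)^2 + (2k-1)$, so proving $|E(G)| \le (k-1)^2 + 1$ amounts to showing that at least $2k-2$ edges are missing from $K_{k,k}$. Equivalently, writing $\bar{G}$ for the bipartite complement of $G$ inside $K_{k,k}$, I want to show $\bar G$ has at least $2k-2$ edges, i.e. $G$ is ``far'' from complete. The intuition is clear: $K_{k,k}$ has a Hamilton path $P_{2k}$, and more generally $G$ must be missing enough edges that no spanning path survives, and a single missing edge or two is not enough to destroy all $2k$-vertex paths in a dense balanced bipartite graph.

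The main approach would be to assume for contradiction that $|E(G)| \ge (k-1)^2 + 2$, hence $\bar G$ has at most $2k-3$ edges, and then produce a $P_{2k}$. First I would observe that $G$ has minimum degree at least $1$ (it is connected) and in fact think about the degree sequence: if many vertices had high degree the graph would clearly be Hamiltonian-traceable. A clean route is to invoke a sufficient condition for a bipartite graph to have a Hamilton path (a spanning path). For instance, a Moon--Moser / Chvátal--Erdős style condition: if $G$ is a balanced bipartite graph on parts of size $k$ with $\delta(G) \ge k/2 + 1$ (or an Ore-type degree-sum condition across the parts) then $G$ is Hamiltonian, hence traceable. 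So the first case to dispatch is when $\bar G$ has no high-degree vertex — then every vertex of $G$ has large degree and $G$ is traceable, contradiction. The remaining case is when some vertex $w$ has small degree in $G$; since $\bar G$ has few edges, $w$ is essentially the only low-degree vertex. Then I would delete $w$ and its $\bar G$-neighbourhood appropriately, or rather route a path that uses $w$ near an endpoint: $w$ has at least one neighbour (connectivity), attach $w$ as a pendant endpoint of a long path in $G - w$, which is a nearly complete balanced-ish bipartite graph on $2k-2$ vertices (well, $k-1$ on one side, $k$ on the other), find a $P_{2k-1}$ there ending at a neighbour of $w$, and extend.

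I expect the main obstacle to be the bookkeeping in the ``unbalanced leftover'' case: after removing the one genuinely low-degree vertex $w\in B$ (say), the graph $G-w$ has parts of sizes $k$ and $k-1$ and is missing very few edges, so it should contain a path on $2k-1$ vertices (a Hamilton path of $G-w$) with a prescribed endpoint in $N_G(w)$; I would need a prescribed-endpoint version of the traceability condition, or argue directly by picking a Hamilton path and rotating it so that one end lands on a neighbour of $w$ — rotation arguments cost a bounded number of ``bad'' edges, which is affordable since $\bar G$ is so sparse. A slicker alternative that avoids heavy machinery: induct or argue by extremality on the structure of $\bar G$. Since $\bar G$ has at most $2k-3$ edges on a bipartite vertex set of size $2k$, by König/defect-Hall $\bar G$ has a vertex cover of size at most $2k-3 < k + (k-1)$... actually more useful: $\bar G$ has at most $2k-3$ edges, so on at least one side, say $A$, the total $\bar G$-degree is at most $k - 1$ (since the sum over both sides is $2|E(\bar G)| \le 4k-6 < 4k-2$, hmm this gives one side $\le 2k-3$, not tight enough) — so I'd instead just note there are at most $2k-3$ vertices incident to $\bar G$-edges, leaving at least $3$ vertices on each side... no: at least $2k - (2k-3) = 3$ vertices total are $\bar G$-isolated, i.e. full-degree in $G$. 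These full-degree vertices are the anchors: build the path greedily alternating, using full-degree vertices to bridge past the constraints imposed by the at most $2k-3$ forbidden pairs. Making that greedy/bridging argument airtight, and checking it still works for small $k$ (the cases $k=3,4$ where ``at least $3$ anchors'' is weak relative to $2k-3$ forbidden edges) is where I'd spend the real effort; small $k$ can be finished by a direct case check since $(k-1)^2+1$ is then small.
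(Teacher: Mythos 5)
Your opening reduction is fine, but notice that it is exactly the lemma restated: assuming $|E(G)|\ge (k-1)^2+2$ is the same as saying the bipartite complement $\overline{G}$ (inside $K_{k,k}$) has at most $2k-3$ edges, and producing a $P_{2k}$ means producing a Hamiltonian path. So the entire content of the lemma lives in the part of your argument that remains a sketch, and that sketch has two concrete flaws. First, the dichotomy ``either the Moon--Moser minimum-degree condition holds, or there is essentially one low-degree vertex $w$'' is not justified and is false in general: $2k-3$ missing edges can be concentrated on several vertices (two vertices of the same side each of degree roughly $k/2$, a degree-$1$ vertex in each side, two degree-$2$ vertices of $A$ sharing most of their non-neighbours, etc.). Consequently, after deleting your single exceptional vertex $w$, the leftover graph on parts of sizes $k$ and $k-1$ can still contain very low degree vertices, so it satisfies no off-the-shelf traceability condition, let alone a prescribed-endpoint one; the ``rotation costs a bounded number of bad edges'' step is precisely where the difficulty sits and is not carried out. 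Second, the fallback ``anchor'' count is numerically wrong: $2k-3$ missing edges have up to $2(2k-3)=4k-6\ge 2k$ endpoints, so there may be \emph{no} vertex of full degree at all (for $k=4$, five missing edges can touch all eight vertices), whereas your greedy/bridging plan relies on at least three such anchors. An Ore-type degree-sum condition does not rescue this either, since a nonadjacent pair can absorb almost all of the deficiency.

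For comparison, the paper proves the lemma by induction on $k$, anchored at the known value $ex_{b,c}(3,3,P_6)=2\cdot 3-1$ from Caro--Patk\'os--Tuza: assuming $|E(G)|\ge (k-1)^2+2$, it first finds a non-cut vertex $x\in A$ of degree at most $k-2$ (via a longest-path argument), shows that in $G'=G\setminus\{x\}$ no vertex of $B$ is a cut-vertex (a counting argument over the two sides of a hypothetical cut) and that $B$ contains two vertices of degree at least $k-1$, deletes one such vertex $y_1$ chosen with care relative to $x$, and shows the resulting balanced graph on $2k-2$ vertices is $P_{2k-2}$-free because any spanning path of it could be rerouted through $x$ and $y_1$ into a Hamiltonian path of $G$; induction then bounds $|E(G)|$ and gives the contradiction. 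Your direct route (``connected plus at most $2k-3$ missing edges implies traceable'') is equivalent to the lemma and might be completable, but as written the two key steps above are respectively unproven and false, so the proposal does not constitute a proof.
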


Using these, we are ready to prove Theorem~\ref{th:bornesup_general}.

\begin{proof}[Proof of Theorem~\ref{th:bornesup_general}]
    We proceed by induction on $k+a+b$. Let $G$ be a connected bipartite $P_{2k}$-free graph with bipartition $(A,B)$ and with $|A|=a$ and $|B|=b$.
    If $b>a$, then by Lemma~\ref{lem:inductionUnbalanced}, there exists a vertex $x \in B$ such that $d(x)\leq k-2$ and $G\setminus \{x\}$ is connected. The graph $G \setminus \{x\}$ is also bipartite and $P_{2k}$-free. By induction, it has at most $(k-2)(b-2) + a$ edges. Consequently, the graph $G$ itself has at most $(k-2)(b-2) + a+(k-2)=(k-2)(b-1) + a$, as desired.

    Therefore, we may assume from now on that $a=b$, i.e. the graph $G$ is balanced. If $a>k$, by Lemma~\ref{lem:inductionBalanced} there are two vertices $x,y$ such that $d(x)+d(y)\leq k-1$ and $G\setminus \{x,y\}$ is balanced. By induction, it has at most $(k-2)(b-2) + (a-1)$ edges. Consequently, the graph $G$ itself has at most $(k-2)(b-2) + (a-1)+k-1=(k-2)(b-1)+a$ edges, as desired.

    We are left with the case $a=b=k$, which is precisely the context of Lemma~\ref{lem:Casdebase}. We note that in that case $(k-2)(b-1)+a=(k-1)^2 - (k-1) + k=(k-1)^2+1$, hence the conclusion.
\end{proof}

\subsection{Proof details}\label{subsec:details}

Claim~\ref{cl:Bleaf} guarantees that a tree has a leaf in its largest side of the bipartition (under a reasonable extra assumption when both sides are the same size).

\begin{claim}\label{cl:Bleaf}
    For any tree $T$, let $A, B$ be the two sides of the bipartition of $T$ and let $r$ the root of $T$. If $r \in A$ and $|A| \leq |B|$ or $r\in B$ and $|A| < |B|$, then $T$ has a leaf $b$ in $B$.    
\end{claim}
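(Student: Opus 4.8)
The plan is to root the tree $T$ and track the bipartition classes as we descend toward a leaf. First I would set up the dichotomy: since $T$ is a tree, its bipartition $(A,B)$ is unique, and a vertex lies in $A$ or $B$ according to the parity of its distance from the root $r$. A leaf of $T$ is a vertex of degree $1$; I want to produce such a leaf lying in $B$. The natural idea is to consider a longest root-to-leaf path (equivalently, follow children greedily until stuck), or more robustly to do a counting/parity argument, since the hypothesis involves the relative sizes $|A|$ and $|B|$.

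Here is the cleaner route I would actually take. Suppose for contradiction that every leaf of $T$ lies in $A$. Orient every edge of $T$ away from the root. Then every non-root vertex has exactly one parent, so the edges not incident to $r$ are in bijection with non-root vertices; more precisely, $|E(T)| = |V(T)| - 1$. Now I would count edges by their endpoint in $B$: each edge of $T$ joins one vertex of $A$ and one vertex of $B$, so $|E(T)| = \sum_{v \in B} d(v)$. Under the contradiction hypothesis, every $v \in B$ is a non-leaf, hence $d(v) \ge 2$, so $|E(T)| \ge 2|B|$. Combining, $|V(T)| - 1 \ge 2|B|$, i.e. $|A| + |B| - 1 \ge 2|B|$, i.e. $|A| \ge |B| + 1 > |B|$. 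This already contradicts the case $|A| \le |B|$ — but I have to be careful, because the root itself may be the lone low-degree vertex of $B$, and the root can have degree $1$ while not being a leaf in the intended sense (or it may legitimately be a degree-$1$ vertex of $B$, which would itself be the desired leaf). So the argument must handle the root separately.

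Concretely: if $r \in B$, then either $r$ is a leaf — but then $r \in B$ is the leaf we want, done — or $d(r) \ge 1$ with $r$ not counted among the leaves; assuming no leaf is in $B$, every vertex of $B$ other than possibly $r$ has degree $\ge 2$, giving $|E(T)| \ge 2(|B| - 1) + d(r) \ge 2|B| - 1$, hence $|A| + |B| - 1 \ge 2|B| - 1$, i.e. $|A| \ge |B|$, contradicting the hypothesis $|A| < |B|$ in the $r \in B$ case. If instead $r \in A$, then all of $B$ consists of non-root vertices, so assuming no leaf lies in $B$ gives $d(v) \ge 2$ for all $v \in B$, hence $|E(T)| = \sum_{v\in B} d(v) \ge 2|B|$, so $|A| + |B| - 1 \ge 2|B|$, i.e. $|A| \ge |B| + 1$, contradicting $|A| \le |B|$ in the $r \in A$ case. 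In both cases we reach a contradiction, so $T$ has a leaf in $B$.

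The main obstacle — really the only subtlety — is the bookkeeping around the root: the hypothesis deliberately splits into the strict inequality $|A| < |B|$ when $r \in B$ and the weak inequality $|A| \le |B|$ when $r \in A$, precisely because the root contributes asymmetrically to the degree sum over $B$ (it lowers the available ``degree budget'' by allowing one vertex of $B$ to have degree $1$ without being the kind of leaf we can freely use, or, when $r\in B$ and $d(r)=1$, it is itself the desired leaf). I would make sure the write-up cleanly states that a degree-$1$ root in $B$ is already a valid output, and otherwise runs the inequality count above. A degenerate check: if $|V(T)| = 1$ then $T$ is a single vertex which is simultaneously root and leaf, and the hypotheses force this vertex to be in $B$ (e.g. $|A| = 0 < 1 = |B|$ is impossible unless the vertex is in $B$), so the statement holds vacuously/trivially; I would dispatch this at the start.
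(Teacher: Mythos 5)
Your proof is correct, and it takes a genuinely different route from the paper's. The paper proves the claim by induction on the height of the rooted tree: depending on whether $r\in A$ or $r\in B$, it finds a connected component $T'$ of $T\setminus r$ whose root lies in the other side and which satisfies the appropriate (weak or strict) inequality, then pulls a leaf of $T'$ back to a leaf of $T$. You instead argue globally by double counting: every edge of the tree has exactly one endpoint in $B$, so $\sum_{v\in B} d(v)=|E(T)|=|A|+|B|-1$, and if no vertex of $B$ were a leaf this sum would be at least $2|B|$ (or at least $2|B|-1$ when the root is the one permitted low-degree vertex of $B$), contradicting $|A|\le |B|$ (resp.\ $|A|<|B|$). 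This is shorter, avoids induction, and makes transparent why the hypothesis is weak for $r\in A$ but strict for $r\in B$; your count even shows that in the $r\in B$ case there is a leaf distinct from the root as soon as $T$ has at least two vertices, which removes any ambiguity about whether a degree-one root counts as a leaf (in the paper's applications the root always lies in $A$, so this is harmless either way). The paper's inductive formulation is instead tailored to the recursive structure of components of $T\setminus r$. Your handling of the degenerate cases (single vertex, degree-one root) is careful and correct.
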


\begin{proof}[Proof of Claim~\ref{cl:Bleaf}]
    We proceed by induction on height $h(T)$ of the tree: 
    \begin{itemize}
        \item If $h(T)$ = 0, then $T$ has a single node $r$ and $r\in B$. 

        \item If $h(T) \geq 1 $, let $r$ be the root of $T$.
        \begin{itemize}
            \item  If $r\in B$ and $|A| < |B|$, then there exists a connected component $T'$ of $T\setminus r$ such that $|A \cap V(T')| \leq |B \cap V(T')|$ and such that the root of $T'$ is in $A$. By induction, $T'$ has a leaf $b$ in $B$. The vertex $b$ is also a leaf for $T$.
            \item If $r\in A$ and $|A| \leq |B|$, then there exists a connected component $T'$ of $T\setminus r$ such that $|A \cap V(T')| < |B \cap V(T')|$ and the root of $T'$ is in $B$. By induction, $T'$ has a leaf $b$ in $B$. The vertex $b$ is also a leaf for $T$.
        \end{itemize}
    \end{itemize}
    
    \vspace{-0.7cm}\cqed 
\end{proof}

\begin{proof}[Proof of Lemma~\ref{lem:inductionUnbalanced}]
    Let $G$ be a connected bipartite graph of bipartition $(A,B)$ such that $|B| \geq |A| \geq k$ and $G$ is $P_{2k}$-free.
    Let $a\in A$ and let $T$ be a Depth-First-Search tree on $G$ with root $a$.
    By Claim~\ref{cl:Bleaf}, the tree $T$ has a leaf $b\in B$. Let $P = u_1u_2\ldots u_{\ell-1}u_\ell$ be the path from $a=u_1$ to $b=u_\ell$ in $T$. 

    The graph $G$ is $P_{2k}$-free and $P$ is a path on $\ell$ vertices, so $\ell \leq 2k-1$. In fact, since the extremities of $P$ lie in different sides of the bipartition, $\ell$ is even hence $\ell \leq 2k-2$. Note that $T\setminus \{b\}$ is connected, hence $G\setminus \{b\}$ is connected. Furthermore, given that $T$ is a DFS tree, we have $N(b) \subset V(P)$, which together with the fact that $G$ is bipartite implies $d(b) \leq k-1$. 

    If $d(b)\leq k-2$, the conclusion holds immediately by taking $x=b$. Therefore, we may assume $d(b) = k-1$. It follows that $\ell = 2k-2$ and $b$ is adjacent to $a$, hence forming a cycle $C$. In fact, $N(b)=V(P)\cap A$. Every vertex $v$ not in the cycle is adjacent to $C$, and all the neighbours of $v$ are also on the cycle, otherwise we could create a path on $2k$ vertices. 
    Let $b'\in B\setminus V(P)$ and let $a'\in A\setminus V(P)$. As argued above, we have $N(b') \subseteq V(P) \cap A$ and $N(a') \subseteq V(P) \cap B$.
    Assume for sake of contradiction that $d(b') \geq k-1$.
    Then $N(b') = V(P) \cap A = N(b)$ and $d(b')=k-1$. Let $i$ be such that $u_i \in N(a')$ (such an $i$ exists by connectivity and since $N(a') \subseteq V(P) \cap B$). 
    Then as $V(P)\cap A = N(b)$, the vertex $b$ is adjacent to $u_{i-1}$ and the path $a'u_i\ldots u_{i+1}\ldots u_\ell u_1 \ldots u_{i-1} b'$ contains $2k$ vertices, contradicting the $P_{2k}$-freeness.
    Therefore, we have $d(b') \leq k-2$.
    Furthermore, notice that $b'$ is not a cut-vertex, as all its neighours are on the path $P$. 
    So we can take $x = b'$.
\end{proof}

\begin{proof}[Proof of \Cref{lem:inductionBalanced}]
    Let $A,B$ be the two sides of the bipartition of $G$.
    Let $P=(u_1,u_2,\dots,u_p)$ be a longest path in $G$.
    Note that $N(u_1) \cup N(u_p) \subseteq V(P)$, which implies that $G\setminus \{u_1,u_p\}$ is connected. 
    In addition, since $G$ is bipartite, it follows that $d(u_1),d(u_p) \leq \frac{|P|}2$. 
    There are two cases, depending on whether $u_1$ and $u_p$ belong to the same set of $\{A,B\}$.

    \begin{claim}\label{cl:same_part}
        If $u_1 \in A$ and $u_p \in B$, we can take $x=u_1$ and $y=u_p$.
    \end{claim}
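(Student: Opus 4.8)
The plan is to check the three requirements on $x=u_1$ and $y=u_p$ separately. That $G\setminus\{u_1,u_p\}$ is connected was already observed just above the claim (all neighbours of $u_1$ and of $u_p$ lie on $P$), and balancedness is immediate, since $u_1\in A$ and $u_p\in B$ means exactly one vertex is removed from each side. Hence the only real content is the degree bound $d(u_1)+d(u_p)\le k-1$.

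First I would set up notation along $P$. As $G$ is bipartite and $u_1\in A$, we have $u_i\in A$ for odd $i$ and $u_i\in B$ for even $i$; since $u_p\in B$, this forces $p$ to be even. Because $G$ is $P_{2k}$-free, $p=|P|\le 2k-1$, and parity upgrades this to $p\le 2k-2$. Moreover $|V(G)|=2n\ge 2k>p$, so $V(P)\subsetneq V(G)$, and by connectivity of $G$ there is a vertex $w\notin V(P)$ with a neighbour $u_m\in V(P)$. Finally, bipartiteness gives $N(u_1)\subseteq\{u_2,u_4,\dots,u_p\}$ and $N(u_p)\subseteq\{u_1,u_3,\dots,u_{p-1}\}$, two sets of size exactly $p/2\le k-1$.

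The key step is a crossing argument: there is no index $j$ with both $u_pu_j\in E(G)$ and $u_1u_{j+1}\in E(G)$. Indeed, such a $j$ would produce the cycle $u_1u_2\cdots u_j u_p u_{p-1}\cdots u_{j+1}u_1$, which spans all of $V(P)$; traversing this cycle starting from $u_m$ gives a path on $p$ vertices within $V(P)$, and prepending $w$ gives a path on $p+1$ vertices, contradicting the maximality of $P$. Granting this, the shift $u_i\mapsto u_{i-1}$ is an injection of $N(u_1)$ into $\{u_1,u_3,\dots,u_{p-1}\}$ whose image is disjoint from $N(u_p)$, so $d(u_1)+d(u_p)\le p/2\le k-1$, as required.

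The step I expect to be the main obstacle is pinning down the crossing argument cleanly, in particular the degenerate cases $j=1$ and $j=p-1$ (where one arc of the spanning cycle collapses to a single vertex), and making sure the existence of the off-path vertex $w$ is genuinely attributed to the hypothesis $n\ge k$ rather than merely to $P_{2k}$-freeness; everything else is routine counting.
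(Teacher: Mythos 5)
Your proof is correct and follows essentially the same route as the paper: the forbidden ``crossing'' pair $u_pu_j,\,u_1u_{j+1}$ yielding a spanning cycle of $V(P)$, extended through an off-path neighbour (which exists since $n\ge k$ forces $|V(G)|>p$) to contradict the maximality of $P$, is exactly the paper's argument, and your shift injection is just the contrapositive packaging of its pigeonhole step, even giving the slightly sharper bound $d(u_1)+d(u_p)\le p/2\le k-1$. The degenerate cases $j=1$ and $j=p-1$ you worry about are harmless, since $p\ge 4$ here and both still produce a genuine spanning cycle.
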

    \begin{proof}
        Note that $p=|P|$ is even in this case.
        If $u_1$ and $u_p$ do not satisfy the necessary conditions, then it must be that $d(u_1)+d(u_p)\geq k$.
        Then, by pigeonhole principle, as $N(u_1),N(u_p)\subseteq V(P)$ are disjoint and as their union has size at least $k$, there exists some $j$ such that $u_{2j}\in N(u_1)$ and $u_{2j-1}\in N(u_p)$. 
        This means that $C =u_1 u_2 \dots u_{2j-1} u_p u_{p-1} \dots u_{2j} u_1$ is a cycle with the same vertex set as $P$.
        Since $G$ is connected and has size more than $|P|$, there exists a vertex $v \in V(G)\setminus V(P)$ and a vertex $u_j \in V(P)$ such that $vu_j \in E(G)$. This contradicts the fact that $P$ has maximum length.
        
        \vspace{-0.4cm}\cqed 
    \end{proof}

    Now, let us assume that we are not under the conditions of \Cref{cl:same_part}, i.e. without loss of generality that $u_1, u_p \in A$. 
    We consider a Depth-First-Search tree $T$ on $G$ with the path $P$ already visited before the start of the DFS, with order $u_1, u_2, \dots, u_p$.
    This means that in particular, $P$ is a path of $T$.
    By \Cref{cl:Bleaf}, as $u_1\in A$ is the root of $T$ and $|A|=|B|$, then $T$ has a leaf $b\in B$.
    Consider the shortest path $P_b$ in $T$ from $b$ to any vertex of $T$.
    Let $1\leq j\leq k-1$, $\ell$ and the $v_i$'s be such that $P_b = b v_1 v_2\ldots v_{\ell-1} v_\ell$ with $v_\ell=u_j$.
    Note that $\ell\geq 1$, as $P$ contains no leaf in $B$.
    The following fact is well-known.
    \begin{observation}\label{obs:DFS}
        In a DFS tree of a graph, any neighbor of a leaf $x$ of the DFS tree is in the root-to-leaf path of $x$.
    \end{observation}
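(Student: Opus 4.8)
The plan is to derive Observation~\ref{obs:DFS} from the classical structural property of depth-first search on \emph{undirected} graphs: a DFS tree has no cross edges, i.e.\ for every edge $yz \in E(G)$ one of $y,z$ is an ancestor of the other in the DFS tree. Granting this, the observation is immediate. If $x$ is a leaf of the DFS tree and $y \in N(x)$, then $x$ cannot be an ancestor of $y$, since a leaf has no proper descendants; hence $y$ must be an ancestor of $x$, which is exactly to say that $y$ lies on the root-to-$x$ path of the tree. (In the special case where $yx$ is a tree edge, this just recovers the fact that $y$ is the parent of $x$.)

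So the only point needing an argument is the no-cross-edge property, and I would dispatch it in one short step. Suppose $yz \in E(G)$ and, without loss of generality, that $y$ is discovered before $z$ during the DFS. At the moment $y$ is discovered, $z$ is still undiscovered, and $yz$ exhibits $z$ as reachable from $y$ along a path of undiscovered vertices (the single edge $yz$); by the white-path property of DFS, $z$ therefore becomes a descendant of $y$ in the DFS tree, so $y$ is an ancestor of $z$, as wanted. Alternatively, one can argue directly in the setting at hand: since $x$ is a leaf, all of its neighbours are already discovered when $x$ is reached (else $x$ would acquire a child), and a neighbour discovered before $x$ cannot have finished before $x$ is discovered (otherwise $x$ would have become its descendant), hence it is still active when $x$ is discovered, i.e.\ an ancestor of $x$.

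I do not expect a genuine obstacle here: the statement is flagged as well known, and the search performed in the surrounding proof of \Cref{lem:inductionBalanced} is a DFS on an undirected graph (with the path $P$ visited first), so the property applies verbatim. The only thing to be careful about is to invoke it in the correct direction — it is the leaf $x$ that fails to be an ancestor, which forces its neighbour to play that role — but this is immediate from $x$ having no children and no proper descendants.
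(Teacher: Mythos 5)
Your proposal is correct: the paper gives no proof of this observation at all (it is invoked as a well-known fact), and your argument via the no-cross-edge/white-path property of DFS on undirected graphs is exactly the standard justification that the paper implicitly relies on. You also correctly note the only relevant subtlety, namely that forcing the DFS to traverse $P$ first still yields a genuine DFS tree, so the ancestor--descendant property applies unchanged.
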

    It follows immediately from \Cref{obs:DFS} that $N(b) \subseteq \{u_1,\ldots,u_j\} \cup V(P_b)$.
    Furthermore, $u_1$ cannot be adjacent to $b$ since $N(u_1)\subseteq P$.

    \begin{claim}\label{cl:partialneighbors}
        The neighbors of $u_1$ along the path $u_j, \dots, u_p$ are in $\{u_j,u_{j+\ell+1}, \dots, u_{p-\ell}\}$. 
    \end{claim}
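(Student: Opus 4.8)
The plan is to argue by contradiction: suppose $u_1$ has a neighbor $u_m$ with $m > j$ and $m \notin \{j, j+\ell+1, j+2(\ell+1), \dots, p-\ell\}$. Since $u_1, u_p \in A$ and $b \in B$, parity tells us that $\ell$ is odd (the path $P_b$ from $b \in B$ to $u_j$; and since $u_1 \in A$, $u_j$ lies in $A$ when $j$ is odd and in $B$ when $j$ is even, so $\ell$ and $j$ have opposite parities — I would fix this carefully). The key geometric object is the combination of three pieces: the path $P_b = b v_1 \dots v_{\ell-1} u_j$, the initial segment $u_1 u_2 \dots u_j$ of $P$, and the edge $u_1 u_m$. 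First I would observe that $u_1 u_2 \dots u_j v_{\ell-1} \dots v_1 b$ is a path on $j + \ell$ vertices contained in $T$ (hence in $G$), call it $Q$; its endpoints are $u_1$ and $b$.

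\textbf{Building the long path.} Now I would use the edge $u_1 u_m$ to splice $Q$ with the tail of $P$. Concretely, consider the walk that starts at $b$, runs back along $Q$ to $u_1$, then jumps via the edge $u_1 u_m$ to $u_m$, and then continues along $P$ in the direction away from $u_j$, i.e. $b\, v_1 \dots v_{\ell-1}\, u_j\, u_{j-1} \dots u_1\, u_m\, u_{m+1} \dots u_p$ (or toward $u_1$-side of $u_m$, whichever is longer / gives the contradiction). This is a genuine path provided $m > j$, since then the vertex sets $\{b, v_1,\dots,v_{\ell-1}, u_j, \dots, u_1\}$ and $\{u_m, u_{m+1}, \dots, u_p\}$ are disjoint. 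Its number of vertices is $(\ell + j) + (p - m + 1)$. Since $P$ is a longest path, this quantity must be $\le p$, giving $\ell + j + 1 \le m$. Running the same construction toward the other end gives $m \le p - \ell$ (using instead the segment $u_m u_{m-1}\dots u_{j+1}$ together with... actually here I would compare with $|P|$ again). So $m$ lies in the window $[j+\ell+1,\, p-\ell]$.

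\textbf{Refining to the residue class.} The bound $j + \ell + 1 \le m \le p - \ell$ is not yet the claimed statement; I still need $m \equiv j \pmod{\ell+1}$, i.e. $m \in \{u_j, u_{j+\ell+1}, u_{j+2(\ell+1)}, \dots\}$. For this I would iterate: if $u_1$ has two neighbors $u_m, u_{m'}$ on the tail with $j \le m < m'$, then I can route $b \to u_1$ along $Q$, then $u_1 \to u_m \to u_{m-1} \to \dots \to u_j$... no — instead I would use the two edges $u_1 u_m$ and $u_1 u_{m'}$ to form the path $u_{m-1} u_{m-2} \dots u_1 \dots$ hmm. The cleanest route: for consecutive neighbors $u_m, u_{m'}$ of $u_1$ along the tail (with no neighbor of $u_1$ strictly between), consider $b\, v_1 \dots v_{\ell-1}\, u_j \dots u_1\, u_{m'}\, u_{m'-1} \dots u_m\, u_1$? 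That revisits $u_1$. So instead compare the path $b \dots u_1 u_{m'} u_{m'+1} \dots u_p$ with $b \dots u_1 u_m u_{m+1}\dots u_p$: both are paths, of lengths $\ell + j + p - m' + 1$ and $\ell + j + p - m + 1$; maximality of $P$ only caps each at $p$. The multiplicativity must instead come from using $v_{\ell}=u_j$ together with the $P$-segment between $u_m$ and $u_{m'}$: the path $u_m u_{m-1} \dots u_{j+1}\, u_j\, v_{\ell - 1} \dots v_1\, b$ has $m - j + \ell$ vertices and avoids $u_{m+1}, \dots, u_p$; prepending $u_{m'} u_{m'-1} \dots u_{m+1}$ via edge $u_1$?? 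This is exactly the step I expect to be the main obstacle — getting the residue-class (not just interval) conclusion forces me to combine a neighbor of $u_1$ on the tail with the detour through $P_b$ in a way that measures gaps between \emph{consecutive} neighbors against $\ell + 1$. I would handle it by taking $u_m$ the \emph{smallest} tail-neighbor of $u_1$ with $m > j$, proving $m = j + \ell + 1$ by the interval argument above applied to the sub-path, and then inducting on the structure (deleting $u_j, \dots, u_{m-1}$ and relabeling, or peeling off $u_m$ and repeating), so that each successive tail-neighbor is forced $\ell+1$ further along. Throughout, the parity bookkeeping (bipartiteness forces $\ell$ odd and thus $\ell + 1$ even, consistent with all these vertices lying in $A = $ the side of $u_1$) is what guarantees the candidate indices $j + t(\ell+1)$ are the only ones with the correct color, which is the ultimate reason the statement has the shape it does.
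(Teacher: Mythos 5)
Your ``interval'' step is in fact the whole of the paper's proof, and the place where you see the main obstacle is a misreading of the statement. In the paper the set $\{u_j,u_{j+\ell+1},\dots,u_{p-\ell}\}$ is not an arithmetic progression with step $\ell+1$: it means $\{u_j\}\cup\{u_i : j+\ell+1\le i\le p-\ell\}$, i.e.\ exactly the two exclusions you derive. This is confirmed by how the claim is used in Claim~\ref{cl:neighbors}, where $u_1$ is allowed up to $\lfloor (p-j-2\ell)/2\rfloor$ neighbours among $u_{j+1},\dots,u_p$ --- a count of the $B$-vertices in the whole interval $[j+\ell+1,p-\ell]$, which would make no sense under the residue-class reading. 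The paper's proof is precisely your two splicings: the path $b\,v_1\dots v_{\ell-1}u_j\dots u_1u_i\dots u_p$ (on $\ell+j+p-i+1$ vertices) rules out $j+1\le i\le j+\ell$, and the path $b\,v_1\dots v_{\ell-1}u_j\dots u_1u_iu_{i-1}\dots u_{j+1}$ (on $\ell+i$ vertices) rules out $i\ge p-\ell+1$. You state the first completely; the second you only gesture at (``compare with $|P|$ again''), and it should be written out as above, but it is the same construction.

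The entire third part of your proposal --- forcing $m\equiv j\pmod{\ell+1}$ by taking consecutive tail-neighbours, peeling, or inducting --- is aimed at a stronger statement that is neither claimed nor needed, and your sketch does not go through: any path can use $u_1$ only once, so two edges $u_1u_m$, $u_1u_{m'}$ cannot both be spliced into one path, and the detour through $P_b$ gives no mechanism that measures the gap between consecutive neighbours of $u_1$ against $\ell+1$. Indeed there is no reason the tail-neighbours of $u_1$ should be spaced exactly $\ell+1$ apart, so this refinement should simply be dropped. A small side remark: your parity bookkeeping is off --- since $b\in B$ and $u_1\in A$, the indices $j$ and $\ell$ have the \emph{same} parity (equivalently $j+\ell+1$ is odd, as the paper notes before Claim~\ref{cl:neighbors}); this plays no role in the present claim, only in the counting that follows it.
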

    \begin{proof}
        Consider the possible neighbors of $u_1$ along the path $u_j, \dots, u_p$.
        \begin{itemize}
            \item $u_1$ cannot have a neighbor $u_i$ with $j+1 \leqslant i \leqslant j+\ell$. Otherwise, the path $$b v_1 \dots v_{\ell-1} u_j \dots u_1 u_i\dots u_p$$ would have length $\ell + j+ p - i+1 \geq p+1$, giving a contradiction.
            \item $u_1$ cannot have a neighbor $u_i$ with $i \geq p-\ell+1$. Otherwise, the path $$b v_1 \dots v_{\ell-1} u_j \dots u_1 u_i \dots u_{j+1}$$ would have length $\ell + j + i - j \geqslant p+1$, giving a contradiction.
        \end{itemize}
        
        \vspace{-0.7cm}\cqed 
    \end{proof}

    Assume for sake of contradiction that $d(u_1)+d(b) \geq k$.
    We now prove the following claim.
    \begin{claim}\label{cl:neighbors}
        The number of vertices adjacent to $u_1$ or $b$ in $u_2,\dots,u_{j-1}$ is at least $j/2$.
    \end{claim}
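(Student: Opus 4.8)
My plan is to establish $|(N(u_1)\cup N(b))\cap\{u_2,\dots,u_{j-1}\}|\ge j/2$ by a counting argument, fed by structural facts about $N(u_1)$ and $N(b)$ and by the maximality of $P$. First I would gather the relevant facts. Since $u_1\in A$ and $b\in B$, the neighbourhoods $N(u_1)$ and $N(b)$ are disjoint, and inside $\{u_2,\dots,u_{j-1}\}$ the former meets only even-indexed vertices and the latter only odd-indexed ones. By \Cref{obs:DFS} we have $N(b)\subseteq\{u_1,\dots,u_j\}\cup\{v_1,\dots,v_{\ell-1}\}$; by \Cref{cl:partialneighbors} together with $N(u_1)\subseteq V(P)$, the neighbours of $u_1$ in $\{u_j,\dots,u_p\}$ lie in $\{u_j\}\cup\{u_{j+\ell+1},\dots,u_{p-\ell}\}$; and $u_1\notin N(b)$, $b\notin N(u_1)$. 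Finally $\ell\le j-1$, because the path $b\,v_1\cdots v_{\ell-1}\,u_j\,u_{j+1}\cdots u_p$ has at most $p$ vertices; in particular $j\ge 2$, and recall $j\le k-1$.

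The heart of the argument is a counting identity. Setting $\beta=|N(b)\cap\{v_1,\dots,v_{\ell-1}\}|$ and $\gamma=|N(u_1)\cap\{u_{j+1},\dots,u_p\}|$, and splitting $d(b)$ and $d(u_1)$ according to the three zones $\{u_2,\dots,u_{j-1}\}$, $\{u_j\}$ and ``outside'' (using that $u_1$ lies in neither neighbourhood while $u_j$ lies in at most one), one obtains
\[
|(N(u_1)\cup N(b))\cap\{u_2,\dots,u_{j-1}\}| \;=\; d(u_1)+d(b)-\beta-\gamma-\varepsilon ,
\]
with $\varepsilon=[u_j\in N(u_1)\cup N(b)]\in\{0,1\}$. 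Plugging in the standing assumption $d(u_1)+d(b)\ge k$ together with $k\ge j+1$ gives $|(N(u_1)\cup N(b))\cap\{u_2,\dots,u_{j-1}\}|\ge j-\beta-\gamma$, so it remains to prove $\beta+\gamma\le j/2$.

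Bounding $\beta$ is the soft part: the $v_t$ that can be adjacent to $b$ lie in $A$, i.e.\ at odd indices, so $\beta\le\lceil(\ell-1)/2\rceil$. Bounding $\gamma$ — the number of ``deep'' neighbours of $u_1$ on $P$ — is where the work is, and is the main obstacle. The interval $\{u_{j+\ell+1},\dots,u_{p-\ell}\}$ contains only $p-j-2\ell$ vertices, which yields a first trade-off $\gamma\le\lceil(p-j-2\ell)/2\rceil$; but this alone is too weak when $p$ is close to $2k-1$ while $j,\ell$ are small, so one must exploit that $P$ is a \emph{longest} path. The tool is to splice a chord $u_1u_m$ with $m$ interior to $P$ together with the tail $b\,v_1\cdots v_{\ell-1}$ hanging at $u_j$ (and, when available, a rotation at an endpoint of $P$) to produce a path on more than $p$ vertices. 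For instance, as soon as there is a ``crossing pair'' $u_i\in N(b)$, $u_{i+1}\in N(u_1)$ with $i+1\le j$, the sequence $v_{\ell-1},\dots,v_1,b,u_i,u_{i-1},\dots,u_1,u_{i+1},u_{i+2},\dots,u_p$ is already a path on $p+\ell\ge p+1$ vertices, a contradiction; analogous constructions rule out too many deep chords. Carrying this case analysis through — while tracking the parity of $j$, which is where the exact value $j/2$ (rather than $\lceil(j-2)/2\rceil$) comes out — finishes the claim.
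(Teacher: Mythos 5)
There is a genuine gap, and it lies exactly where you park the difficulty: the reduction to ``$\beta+\gamma\le j/2$'' is the wrong target. Your counting identity is fine, but in passing from $d(u_1)+d(b)\ge k$ to $d(u_1)+d(b)\ge j+1$ you discard the quantitative input that makes the claim true. The paper's proof keeps $k$ on the left and instead bounds the two ``outside'' zones absolutely: $\beta\le\lceil(\ell-1)/2\rceil$ (as you do) and, crucially, $\gamma\le\lfloor(p-j-2\ell)/2\rfloor=\lfloor(p-j)/2\rfloor-\ell$ via \Cref{cl:partialneighbors} plus parity, and then uses $P_{2k}$-freeness in the form $p\le 2k-1$. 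This gives at least $k-\lfloor(2k-1-j)/2\rfloor+\ell-\lceil(\ell-1)/2\rceil=\lceil(j+1)/2\rceil+\lfloor(\ell+1)/2\rfloor\ge j/2+1$ neighbours in $\{u_2,\dots,u_j\}$, hence $j/2$ in $\{u_2,\dots,u_{j-1}\}$. No bound of the form $\gamma\lesssim j/2$ is needed, and none is available: $\gamma$ can legitimately be of order $k-j/2-\ell$, far larger than $j/2$ when $j$ is small.

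Worse, your intermediate goal is incompatible with the standing assumption in most of the range: your own identity, together with the trivial upper bound $|\{u_2,\dots,u_{j-1}\}|\le j-2$ and $d(u_1)+d(b)\ge k$, forces $\beta+\gamma\ge k-j+1$, which exceeds $j/2$ whenever $j<2(k+1)/3$. So ``proving'' $\beta+\gamma\le j/2$ there would amount to refuting the assumption $d(u_1)+d(b)\ge k$ outright, i.e.\ doing the whole of \Cref{lem:inductionBalanced}, which your sketch does not do. The tools you propose cannot do it either: the ``crossing pair'' splice $v_{\ell-1}\dots v_1\,b\,u_i\dots u_1\,u_{i+1}\dots u_p$ is precisely the good-block contradiction that the paper runs \emph{after} this claim, and it needs the claim's conclusion (many neighbours of $u_1,b$ in the initial segment) to guarantee such a pair exists; as a device for excluding deep chords $u_1u_m$ with $m\ge j+\ell+1$ it yields nothing beyond \Cref{cl:partialneighbors}, since those chords are individually harmless (they occur in extremal examples). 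The remaining ``carrying this case analysis through'' is therefore not a routine verification but the missing proof.
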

    \begin{proof}
        Notice that $j+\ell+1$ is always odd, i.e. $u_{j+\ell+1}\in A$.
        Using Claim~\ref{cl:partialneighbors} and the fact that $u_1$ is only adjacent to vertices in $B$, we get that $u_1$ has at most $\lfloor  \frac{p-j-2 \ell}2 \rfloor  = \lfloor \frac{p-j}2 \rfloor - \ell$ neighbors in $u_{j+1}, \dots, u_p$. As $b\in B$, the same type of argument (without using Claim~\ref{cl:partialneighbors}) gives us that $b$ has at most $\lceil \frac{\ell-1}2 \rceil$ neighbors in $P_b\setminus \{v_\ell\}$.
        Thus the number of vertices adjacent to $u_1$ or $b$ in $u_2,\dots,u_j$ is at least
        \[
            k-\left\lfloor  \frac{p-j}2 \right\rfloor  + \ell - \left\lceil \frac{\ell-1}2 \right\rceil 
            \geq k-\left\lfloor  \frac{2k-1-j}2 \right\rfloor  + \ell - \left\lceil \frac{\ell-1}2 \right\rceil
            = \left\lceil \frac{1+j}2 \right\rceil + \left\lfloor \frac{\ell+1}2 \right\rfloor\geq \frac{j}{2}+1
        \]
        The statement of the claim follows from the last inequality.
        
        \vspace{-0.4cm}\cqed 
    \end{proof}
    

    A \emph{block} is a pair of vertices $(u_{2i-1},u_{2i})\in A\times B$ for some $2\leq i\leq j/2$.
    It is \emph{good} if $u_{2i-1} b\in E(G)$ and $u_{2i} u_1\in E(G)$.
    If there exists some good block, we can derive a contradiction on the maximality of $p$ by considering the path $v_{\ell-1}\ldots v_1 b u_i \ldots u_1 u_{i+1} \ldots u_p$.
    We now prove that $d(u_1)+d(b) \leq k - 1$ with the following.
    \begin{claim}
        There exists a good block.
    \end{claim}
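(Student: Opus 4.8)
The plan is to prove the existence of a good block by a counting argument on the blocks, using Claim~\ref{cl:neighbors} as the key input. Recall a block is a pair $(u_{2i-1}, u_{2i})$ with $2 \le i \le j/2$, so there are exactly $j/2 - 1$ blocks in total, and they partition the vertices $u_3, u_4, \dots, u_j$ (or we may as well say $u_2, \dots, u_{j-1}$, matching the indexing of Claim~\ref{cl:neighbors}, since $u_1 b \notin E(G)$ and the parity forces the relevant vertices into these pairs). A block $(u_{2i-1}, u_{2i})$ is \emph{good} when $u_{2i-1}b \in E(G)$ \emph{and} $u_{2i}u_1 \in E(G)$; note that by bipartiteness $u_{2i-1} \in A$ can only be a neighbor of $b \in B$, and $u_{2i} \in B$ can only be a neighbor of $u_1 \in A$, so each of the two ``slots'' in a block can be filled by at most one of the two relevant adjacencies.

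First I would set $N_b$ to be the set of vertices among $u_2, \dots, u_{j-1}$ adjacent to $b$ and $N_1$ the set among them adjacent to $u_1$; these are disjoint (again by bipartiteness, since they lie on opposite sides), and by Claim~\ref{cl:neighbors} we have $|N_b| + |N_1| \ge j/2$. Each element of $N_b$ lies in the odd slot of exactly one block and each element of $N_1$ lies in the even slot of exactly one block. If no block were good, then no block would contain both an element of $N_b$ and an element of $N_1$; hence the blocks hit by $N_b$ and the blocks hit by $N_1$ would be disjoint families of blocks, giving $|N_b| + |N_1| \le (\text{number of blocks}) = j/2 - 1$, which contradicts $|N_b| + |N_1| \ge j/2$. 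Therefore some block is good.

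I should be a little careful about the exact range and parity bookkeeping — in particular whether the counting is over $u_2, \dots, u_{j-1}$ or $u_2,\dots,u_j$, and whether $j$ is even (the definition of a block with $2 \le i \le j/2$ implicitly wants $j$ even, or one takes $\lfloor j/2 \rfloor$); the statement $\lceil (1+j)/2 \rceil + \lfloor (\ell+1)/2\rfloor \ge j/2 + 1$ from the proof of Claim~\ref{cl:neighbors} gives a tiny bit of slack, which is exactly what absorbs any off-by-one coming from $u_1 b \notin E(G)$ or from rounding. The main obstacle is thus not conceptual but making sure the index ranges line up so that ``$|N_b| + |N_1| \ge$ (number of blocks) $+ 1$'' holds cleanly; once that is pinned down, pigeonhole on the blocks finishes it, and then the good block yields the path $v_{\ell-1}\ldots v_1 b\, u_i \ldots u_1\, u_{i+1} \ldots u_p$ on at least $p+1$ vertices, contradicting the maximality of $P$ and hence establishing $d(u_1) + d(b) \le k-1$, which is what \Cref{lem:inductionBalanced} needs in this case.
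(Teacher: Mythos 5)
Your argument is essentially the paper's own proof: assuming no good block, each block contributes at most one vertex adjacent to $u_1$ or $b$, and this count is played off against Claim~\ref{cl:neighbors}, with the same path through a good block contradicting the maximality of $P$. The only delicacy, which you correctly flag, is the off-by-one coming from $u_2$ (adjacent to $u_1$ but lying in no block); the paper absorbs this with its ``$+1$'' term, and, as you note, the slack $\geq j/2+1$ in the proof of Claim~\ref{cl:neighbors} covers the remaining rounding issue.
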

    \begin{proof}
        Suppose for the sake of contradiction that there is no good block.
        Then every block can only be adjacent to one of $u_1$ and $b$.
        Moreover, every vertex in $\{u_3,\dots, u_{j-1}\}$ is contained in some block.
        Therefore, as there are $\left\lfloor\frac{j-3}{2}\right\rfloor$ blocks, the number of vertices adjacent to $u_1$ or $b$ in $\{u_2,\dots,u_{j-1}\}$ is at most $\left\lfloor\frac{j-3}{2}\right\rfloor + 1 = \left\lfloor\frac{j-1}{2}\right\rfloor < \frac{j}{2}$.
        This is a contradiction with \Cref{cl:neighbors}.
        
        \vspace{-0.4cm}\cqed 
    \end{proof}
    
    Thus $d(u_1)+d(b) \leq k - 1$. Furthermore, $G - \{u_1,b\}$ is connected. Indeed, $b$ is a leaf of the spanning tree $T$ and $u_1$ is the root but has only one child $u_2$. Thus $T - \{u_1,b\}$ is still a spanning tree of $G- \{u_1,b\}$.
    Hence we can take $x=u_1$ and $y=b$.
\end{proof}

\begin{proof}[Proof of Lemma~\ref{lem:Casdebase}]
    We proceed by induction on $k$.
    The case $k\leq 3$ has already been solved in \cite{caro2025bipartite}: see \Cref{pr:Caro}.
    Therefore, let $k > 3$, $G$ be a connected balanced bipartite graph of size $2k$ with no $P_{2k}$ and let $A, B$ be the two sides of the bipartition of $G$.
    We assume for a contradiction that $|E(G)| \geq (k-1)^2 + 2$.
    Let $P = u_1 \ldots u_p$ be a path of maximum length in $G$. Then $p< 2k$ and, without loss of generality, $u_1 \in A$.
    We first get some degree constraints on $u_1$.
    \begin{claim}\label{cl:max_degree}
        The vertex $u_1$ is not a cut-vertex of $G$ and $d(u_1)\leq k-2$.
    \end{claim}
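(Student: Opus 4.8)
The goal is to show that in the base case $|A|=|B|=k$, a longest path $P=u_1\ldots u_p$ (with $u_1\in A$) has the property that $u_1$ is not a cut-vertex and $d(u_1)\le k-2$. The strategy mirrors the endpoint analysis already used repeatedly in the excerpt: since $P$ is a longest path, $N(u_1)\subseteq V(P)$, so if $G\setminus\{u_1\}$ were disconnected, some component not containing $u_2$ would have no path back to the rest, forcing a vertex of that component to attach to $V(P)$ and — combined with the fact that $u_1$ sits at the end of $P$ with all neighbours on $P$ — one could splice in that extra vertex to extend $P$, a contradiction. So $u_1$ is not a cut-vertex. For the degree bound, first note $d(u_1)\le \lfloor p/2\rfloor \le k-1$ because $N(u_1)\subseteq V(P)$ lies entirely in $B$; the work is to rule out $d(u_1)=k-1$.

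Suppose $d(u_1)=k-1$. Then $p\ge 2k-2$, and since $p<2k$ with $u_1\in A$ forcing $p$ even (otherwise $u_p\in A$ too, which we can handle separately or by the same rotation trick), we get $p=2k-2$, and $N(u_1)=\{u_2,u_4,\ldots,u_{2k-2}\}$ is exactly the set of all $B$-vertices on $P$. In particular $u_1u_p\in E(G)$, so $P$ closes into a cycle $C=u_1u_2\ldots u_pu_1$ on $2k-2$ vertices, and $B\cap V(P)=N(u_1)$ accounts for $k-1$ of the $k$ vertices of $B$. The two vertices off the cycle are exactly one vertex $a^\*\in A$ and one vertex $b^\*\in B$. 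As in the proof of Lemma~\ref{lem:inductionUnbalanced}, every off-cycle vertex has all its neighbours on $C$ (else extend $P$), so $N(a^\*)\subseteq B\cap V(P)=N(u_1)$ and $N(b^\*)\subseteq A\cap V(P)$. Now take any $u_i\in N(a^\*)$; then $u_i=u_{2t}$ for some $t$, and rotating the cycle one can build a path from $a^\*$ through $u_i$ around $C$ ending at $u_{i-1}$, which has $2k-1$ vertices — and if $b^\*$ is adjacent to that endpoint we get $2k$ vertices. The counting then forces a contradiction: $b^\*$ has at least $k-2$ neighbours among the $k-1$ vertices of $A\cap V(P)$ — wait, we must be careful, $|A\cap V(P)|=k-1$ but $|A|=k$ so $a^\*$ is the missing one — and with $d(u_1)=k-1$ filling all of $B\cap V(P)$, a pigeonhole/rotation argument of exactly the flavour of Claim~\ref{cl:same_part} produces a $P_{2k}$. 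Concretely: pick $u_i\in N(a^\*)$; since $N(u_1)$ is all of $B\cap V(P)$, $u_1$ is adjacent to $u_{i-1}$; if additionally $b^\*$ is adjacent to some $u_{i-1}$-type vertex reachable by the rotation, splice; otherwise the restriction on $N(b^\*)$ together with $|E(G)|\ge (k-1)^2+2$ overshoots the number of edges $K_{k-1,k-1}$ plus two pendant-ish vertices can carry, contradiction.

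The main obstacle I expect is the bookkeeping in the case $d(u_1)=k-1$: one must simultaneously track that $P$ becomes a Hamiltonian cycle of $G\setminus\{a^\*,b^\*\}$, that $a^\*$ and $b^\*$ each have all neighbours on that cycle, and then run a rotation (cycle-plus-pendant) argument to extend to a $P_{2k}$ or else bound $|E(G)|$ below $(k-1)^2+2$. The edge-count contradiction is the cleanest finish: $G$ restricted to $V(P)$ has at most $(k-1)^2$ edges (complete bipartite $K_{k-1,k-1}$), and $a^\*,b^\*$ can contribute at most… this is where one needs the rotation argument to cap their degrees, since naively $a^\*$ could have up to $k-1$ neighbours. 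So the rotation argument is unavoidable; I would isolate it as the heart of the proof, exactly paralleling the $d(b')\le k-2$ step in Lemma~\ref{lem:inductionUnbalanced}, and then conclude either by direct extension to $P_{2k}$ or by the edge count. An alternative, possibly shorter, route: apply Lemma~\ref{lem:inductionUnbalanced}-style reasoning directly — but since here $|A|=|B|$ that lemma does not apply, which is precisely why the base case needs its own argument; hence I would commit to the rotation approach.
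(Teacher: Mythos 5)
There is a genuine gap: your argument never actually closes the case $d(u_1)=k-1$. After setting up the cycle $C$ on $V(P)$ and the two off-cycle vertices $a^*\in A$, $b^*\in B$, your plan is to build a path on $2k$ vertices, and when the adjacency you need for that ($b^*$ attached to the right endpoint of the rotated path) is not available, you fall back on an edge count --- but, as you yourself concede, $K_{k-1,k-1}$ plus two vertices of degree up to $k-1$ carries $(k-1)^2+2(k-1)\geq (k-1)^2+2$ edges, so the count gives no contradiction unless the rotation argument first caps $d(a^*)$ and $d(b^*)$, and that rotation is exactly the step you leave as a promissory note. The intermediate assertion that ``$b^*$ has at least $k-2$ neighbours in $A\cap V(P)$'' has no justification (nothing bounds $d(b^*)$ from below), and the case $p=2k-1$ with both endpoints in $A$ is waved off rather than treated. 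As written, the proof does not go through.

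The fix is also simpler than what you attempt: the contradiction should be with the \emph{maximality} of $P$ (any path on $p+1$ vertices suffices), not with $P_{2k}$-freeness, so no edge counting and no $P_{2k}$ is needed. In your even case, once $P$ closes into the cycle $C$ on $2k-2$ vertices, connectivity gives an off-cycle vertex $w$ adjacent to some $u_i\in V(C)$, and $w\,u_i\,u_{i+1}\cdots u_{i-1}$ already has $2k-1>p$ vertices --- done, exactly as in \Cref{cl:same_part}. The paper avoids the cycle altogether: from $d(u_1)\geq k-1$ it gets $V(P)\cap B=N(u_1)$ and picks $b\in B\setminus V(P)$; if $b$ is adjacent to some $u_i\in A\cap V(P)$, the splice $b\,u_i\,u_{i-1}\cdots u_1\,u_{i+1}\cdots u_p$ (valid because $u_{i+1}\in N(u_1)$) has $p+1$ vertices; if $b$ has no neighbour on $P$, route through the unique off-path vertex $a\in A$ and splice $b\,a\,u_i\,u_{i-1}\cdots u_1\,u_{i+2}\cdots u_p$. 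That two-case chord rotation, applied to the off-path $B$-vertex rather than to $a^*$, is the missing heart of your argument and also subsumes your deferred odd case.
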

    \begin{proof}
        The vertex $u_1$ is not a cut-vertex, otherwise we could extend $P$, contradicting its maximality.
        In addition, $N(u_1) \subseteq V(P)$, otherwise $P$ can be extended.
        Suppose for the sake of contradiction that $d(u_1) \geq k-1$.
        Then the length of $P$ is at least $2k-2$ and $V(P) \cap B \subseteq N(u_1)$. As $G$ contains $k$ vertices in $B$, there exists a vertex $b \in B$ not in the path.
        There are two cases.
        \begin{itemize}
            \item If $b$ is adjacent to a vertex $u_i \in A$ of the path, then we can construct  a new path $b u_i u_{i-1} \dots u_1 u_{i+1} \ldots u_p$, which contradicts the maximality of $P$.
            \item Otherwise, $b$ is not adjacent to $P$. Since $G$ is connected and there are only two vertices not in $V(P)$, there exists $a\in A\cap N(b)$ such that $a$ is adjacent to some vertex $u_i\in B\cap V(P)$. The path $b a u_i u_{i-1} \ldots u_1 u_{i+2} \ldots u_p$ contradicts the maximality of $P$.
        \end{itemize} 
        Both cases lead to a contradiction, hence $d(u_1)\leq k-2$.
        \cqed 
    \end{proof}
    Among the vertices of $A$ that are not cut-vertices, let $x \in A$ be one with smallest degree. By Claim~\ref{cl:max_degree}, we have $d(x) \leq k-2$.
    Consider the graph $G' =  G \setminus \{x\}$, which is connected.
    We have $|E(G')| \geq (k-1)^2 +2 -(k-2) =  k^2-3k+5$.
    Furthermore, in $G'$, no vertex of $A$ has degree $1$. Suppose for sake of contradiction that $z\in A$ has degree $1$. Then $z$ is not a cut-vertex and by minimality of $d(x)$, we derive $d(x)=1$. All the other vertices of $A$ have degree at most $k$ in $G$, since $G$ is bipartite and $|B|=k$. This sums up to $k(k-2)+2< (k-1)^2 +2 \leq |E(G)|$ edges, which is a contradiction.
    We now prove the following claim.
    \begin{claim}\label{cl:nbrcutvertex}
        No vertex of $B$ is a cut-vertex of $G'$.
    \end{claim}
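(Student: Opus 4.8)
The plan is to argue by contradiction through an edge count on $G'$. Suppose some vertex $b \in B$ is a cut-vertex of $G'$, and let $D_1,\dots,D_m$ (with $m \ge 2$) be the connected components of $G' \setminus \{b\}$. For each $i$, set $a_i = |A \cap D_i|$ and $b_i = |B \cap D_i|$, so that $\sum_i a_i = |A|-1 = k-1$ and $\sum_i b_i = |B|-1 = k-1$ (recall $x \in A$ was removed). Before counting edges I would record two structural facts. First, $a_i \ge 1$ for every $i$: since $G'$ is connected and $D_i$ is a component of $G' \setminus \{b\}$, the vertex $b$ has a neighbour in $D_i$, and this neighbour lies in $A$ because $b \in B$ and $G'$ is bipartite. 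Second, $b_i \ge 1$ for every $i$: otherwise $D_i$ would be a set of vertices of $A$, pairwise non-adjacent by bipartiteness, hence a single vertex $v$ by connectedness of $D_i$, and then $v$'s only possible neighbour would be $b$, making $d_{G'}(v) = 1$ — which contradicts the already established fact that no vertex of $A$ has degree $1$ in $G'$.

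Next I would count the edges of $G'$. Every edge of $G'$ is either incident to $b$ or lies inside one of the $D_i$ (there is no edge between distinct components of $G' \setminus \{b\}$), so
\[
    |E(G')| \;\le\; d_{G'}(b) + \sum_{i} a_i b_i \;\le\; (k-1) + \sum_i a_i b_i,
\]
using that the neighbours of $b$ lie in $A \setminus \{x\}$. It then remains to show $\sum_i a_i b_i \le (k-2)^2 + 1$ under the constraints $\sum_i a_i = \sum_i b_i = k-1$, all $a_i, b_i \ge 1$, and $m \ge 2$. Merging $D_2,\dots,D_m$ into one block (which can only increase $\sum_i a_i b_i$, since the discarded cross terms $a_i b_j$ are nonnegative) reduces this to the case $m=2$, where a direct optimisation — push one of the two $b$-values up to $k-2$ and then the corresponding $a$-value up to $k-2$ — gives the bound $(k-2)^2 + 1$. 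Substituting back yields $|E(G')| \le (k-1) + (k-2)^2 + 1 = k^2 - 3k + 4$, contradicting the inequality $|E(G')| \ge (k-1)^2 + 2 - (k-2) = k^2 - 3k + 5$ recorded just above the claim. This contradiction shows that no vertex of $B$ is a cut-vertex of $G'$.

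The step I expect to require the most care is the optimisation bounding $\sum_i a_i b_i$: the crude estimate that every part has size at most $k-1$, or even $k-2$, yields only $|E(G')| \le (k-1)^2$, which is not strong enough for $k \ge 4$. One genuinely needs to exploit simultaneously that every component of $G'\setminus\{b\}$ contains a vertex of $A$ \emph{and} a vertex of $B$ — i.e.\ both $a_i \ge 1$ and $b_i \ge 1$ — so that the extremal configuration is forced to be $(a_1,b_1)=(k-2,k-2)$ and $(a_2,b_2)=(1,1)$; this is precisely where the bipartiteness and connectivity of $G'$ and the ``no degree-$1$ vertex in $A$'' hypothesis are used. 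Everything else is routine.
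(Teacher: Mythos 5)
Your proof is correct and follows essentially the same route as the paper: assume a cut-vertex $b\in B$ of $G'$, bound $|E(G')|$ by $d_{G'}(b)\le k-1$ plus the bipartite edge counts of the pieces, use connectivity plus the ``no degree-$1$ vertex in $A$'' fact to force each piece to meet both sides, and conclude via the bilinear optimisation that $|E(G')|\le k^2-3k+4<k^2-3k+5$. The only (cosmetic) difference is that you treat an arbitrary number of components and merge them, whereas the paper groups the pieces into two from the start.
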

    \begin{proof}
        Suppose for the sake of contradiction that $G'$ has a cut-vertex $y \in B$. Then $y$ separates $G'$ into two non-empty graphs $G_1$ and $G_2$ such that $|V(G_1)\cap A| = (k-1) - |V(G_2)\cap A| = \ell $ and $|V(G_1)\cap B| = k-1 - |V(G_2)\cap B| = p $.
        Notice that $y$ needs to be adjacent to at least one vertex in $G_1$ and in $G_2$, so $1\leq \ell \leq k-2$. Moreover, every vertex of $A$ has at least $2$ neighbors in $G'$, in particular one different from $y$. Thus, $1\leq p \leq k-2$.
        The maximal number of edges of $G_1$ is $\ell p$, and for $G_2$, it is $(k-1-\ell)(k-1-p)$. As $y$ has at most $k-1$ edges, the total number of edges of $G'$ is at most $f(\ell,p):=k-1 + \ell p + (k-1-\ell)(k-1-p)$.
        We now show that $f(\ell,p)< k^2-3k+5\leq |E(G')|$, giving a contradiction. 
        Notice that $f(\ell,p)$ is linear in $\ell$ and $p$.
        Therefore, the maximal values of $f$ in $[1,k-2]^2$ are attained at the corners, when $\ell,p\in[1,k-2]$.
        Evaluate $f$ at the corners:
        \begin{itemize}
            \item $f(1,1) = k-1 + 1 + (k-2)^2 = k^2-3k+4$,
            \item $f(k-2,k-2) = f(1,1)$ by symmetry,
            \item $f(1,k-2) = k-1+ k-2 + k-2 = 3k-5 \leq k^2-3k+4$, and
            \item $f(k-2,1)=f(1,k-2)$ by symmetry.
        \end{itemize}
        This proves that $f(\ell,p)\leq k^2-3k+4 < |E(G')|$, a contradiction.

        \vspace{-0.4cm}\cqed 
    \end{proof}

    \begin{claim}\label{cl:degvertex}
        There exist two vertices of $B$ of degree at least $k-1$ in $G$.
    \end{claim}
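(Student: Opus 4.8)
The plan is to prove \Cref{cl:degvertex} by a direct double count of $E(G)$ from the side $B$, not using the auxiliary graph $G'$ at all. Since $G$ is balanced with $|A|=|B|=k$, every vertex of $B$ sends all of its edges into $A$ and hence has degree at most $k$, and moreover $|E(G)| = \sum_{b\in B} d(b)$.

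The core step is the following counting argument. I would suppose toward a contradiction that at most one vertex of $B$ has degree at least $k-1$. Then at least $k-1$ vertices of $B$ have degree at most $k-2$, while the one possible remaining vertex has degree at most $k$, so $|E(G)| \le (k-1)(k-2) + k = k^2-2k+2$. This contradicts the standing hypothesis $|E(G)| \ge (k-1)^2+2 = k^2-2k+3$ of the proof of \Cref{lem:Casdebase}, and therefore at least two vertices of $B$ have degree at least $k-1$ in $G$.

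I do not expect any genuine obstacle here: the only thing to verify is the strict inequality $(k-1)(k-2)+k < (k-1)^2+2$, i.e. $k^2-2k+2 < k^2-2k+3$, which is immediate and holds for every $k$. (One could equivalently run the count inside $G'$, using that each $b \in B$ keeps all but at most one of its edges when $x$ is deleted, but the direct count in $G$ is shorter and does not even use $k>3$.)
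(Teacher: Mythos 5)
Your argument is correct and is essentially the same as the paper's: both count edges from the side $B$, bound the total by $(k-1)(k-2)+k=(k-1)^2+1$ if at most one vertex of $B$ had degree at least $k-1$, and contradict the standing assumption $|E(G)|\geq (k-1)^2+2$. No gap; nothing further to add.
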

    \begin{proof}
        If there is only one vertex of degree at least than $k-1$, then the total number of edges of $G$ is less than $(k-1)(k-2) + k = (k-1)^2 +1 < (k-1)^2 +2 $. So, there is another vertex of degree at least $k-1$.
        \cqed 
    \end{proof}
    From Claim~\ref{cl:degvertex} and Claim~\ref{cl:nbrcutvertex}, there exist two vertices $y_1,y_2 \in B$ such that $d_G(y_1) \geq k-1$, $d_G(y_2) \geq k-1$ and that are not cut-vertices of $G'$.
    Without loss of generality, either both $y_1$ and $y_2$ are adjacent to $x$, or $y_1$ is not adjacent to $x$.
    Consider the connected balanced bipartite graph $G'' =G' - y_1$ on $2k -2$ vertices.
    Suppose for the sake of contradiction that $G''$ is not $P_{2k-2}\text{-free}$.
    Consider the (hamiltonian) path $P' = v_1\dots v_{2k-2}$ in $G''$, and assume that $v_1\in A$ without lost of generality, so that $v_{2k-2}\in B$.
    Notice that $P'$ contains $k-1$ vertices of $A$ and $k-1$ vertices of $B$. 
    There are two possibilities:
    \begin{itemize}
        \item If $y_1$ is adjacent to $x$, then by assumption on $y_1$, we have $d_G(x)\geq 2$. Then, it is adjacent to some $v_i$.
        If $i=2k-2$, then the path $yxv_{2k-2}\dots v_1$ is a hamiltonian path of $G$, contradiction.
        Therefore, $2\leq i\leq 2k-3$ and at least one of $v_{i-1}$ and $v_{i+1}$ is adjacent to $y_1$, because $d_G(y_1)\geq k-1$ ($y_1$ has at most one non neighbor in $V(P')\cap A$).
        Suppose it is $v_{i+1}$. The other case is similar.
        The path $v_1 \dots v_i x y_1 u_{i+1} \dots v_{2k-2}$ is a hamiltonian path of $G$, contradiction.
        \item Else, $y_1$ is non-adjacent to $x$ and $y_1$ is adjacent to every vertex of $A$ in $P'$.
        As $G$ is connected, $x$ is adjacent to some $v_i$. If $i=2k-2$, then $y_1 v_1 \dots v_{2k-2} x$ is a hamiltonian path of $G$, contradiction.
        Therefore, $2\leq i\leq 2k-3$ and both $v_{i-1}$ and $v_{i+1}$ are adjacent to $y_1$.
        The path $x v_i v_{i-1} \dots v_1 y_1 v_{i+1} \ldots v_{2k-2}$ is a hamiltonian path of $G$, contradiction.
    \end{itemize}
    Both cases lead to a contradiction. Hence $G''$ is $P_{2k-2}\text{-free}$.
    By induction hypothesis, we have that $|E(G'')| \leq (k-2)^2 +1$, and \[|E(G)|\leq |E(G'')| + (k-1)+(k-2) \leq (k-2)^2 + 2k-2 = (k-1)^2 + 1.\]
    This finishes the proof.
\end{proof}

\section{Brooms}\label{sec:brooms}

We start by proving the following two lemmas : 

\begin{lemma}\label{lem:all_path2}
     For every $k \geq 1$, for any connected bipartite graph $G$ of part sizes $a$ and $b$ with $b,a \geq 1$, if there exists a vertex of $G$ which is not the endpoint of a path of size $2k$, then $G$ has at most $(k-1)(b+a)$ edges.
\end{lemma}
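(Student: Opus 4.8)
The plan is to fix a vertex $v$ that is not the endpoint of any path on $2k$ vertices, and to bound the edges by a BFS/DFS layering argument from $v$. More precisely, I would run a breadth-first search (or a DFS) rooted at $v$ and organize the vertices into levels $L_0=\{v\}, L_1, L_2, \dots$ according to distance from $v$. The key observation is that if some vertex $w$ lies at distance $\geq k$ from $v$, then the shortest $v$--$w$ path already has $\geq k+1$ vertices; combined with the fact that $w$ itself can be extended a little, one gets a path on $2k$ vertices ending at $v$, contradicting the hypothesis. So I first want to argue that every vertex is within distance roughly $k-1$ of $v$, i.e. the BFS tree has bounded depth. Actually the cleanest route: show that $G$ has no path on $2k$ vertices using $v$ as one endpoint means, in particular, that $v$ is an endpoint of no long path; but we need a statement about \emph{all} edges, so bounding eccentricity of $v$ is the natural handle.

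The main work is then to turn ``bounded depth from $v$'' into the edge bound $(k-1)(a+b)$. Here I would assign to each vertex $u \neq v$ a charge equal to its distance-decreasing edges, or more simply: in the BFS tree, each vertex $u$ at level $i$ has all its neighbours in levels $i-1, i, i+1$. I would try to show that a vertex at level $i$ can be incident to at most $\min(i, \text{something})$ ``backward or sideways'' edges, so that summing over the two sides of the bipartition gives the claimed bound $(k-1)(a+b)$. The factor $k-1$ should come from the fact that a path descending from $v$ to a deep vertex and then climbing back cannot exceed $2k-1$ vertices; concretely, if $u$ is at level $i$ and has a neighbour at level $\geq i$, one can often splice together a down-path of length $i$ and an up-path to build something of length about $2i$, forcing $i \leq k-1$. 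I'd make this precise by charging each edge to its endpoint that is farther from $v$ (breaking ties within a level consistently), and showing each vertex receives charge at most $k-1$; the bipartition lets us write the total as a sum over $A$ and over $B$ separately, each contributing at most $(k-1)$ per vertex.

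The step I expect to be the main obstacle is controlling the ``sideways'' edges — edges within a single BFS level — and making the charging scheme tight enough to land exactly on $(k-1)(a+b)$ rather than something like $2(k-1)(a+b)$. A naive bound would double-count level-internal edges or give each vertex charge up to $2(k-1)$. To fix this I would likely use a DFS tree instead of BFS (as the authors do elsewhere in the paper, e.g. in the proof of Lemma~\ref{lem:inductionUnbalanced}), since in a DFS tree every non-tree edge joins an ancestor to a descendant, eliminating sideways edges entirely; then each vertex $u$ is incident to tree/back edges only to its ancestors, and the $P_{2k}$-avoidance at $v$ forces the relevant ancestor chains to have length $\leq k-1$. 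Summing the at-most-$(k-1)$ ancestor-edges per vertex over all vertices, and splitting the count across the two colour classes, should yield $|E(G)| \leq (k-1)(a+b)$. I would double-check the edge cases $k=1$ (graph is just the isolated-looking star / single edge situation) and small $a,b$ separately, since the induction-free counting argument degenerates there.
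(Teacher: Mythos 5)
Your final DFS plan is salvageable and genuinely different from the paper's proof (the paper takes a maximum path $P=xu_1\ldots u_p$ starting at the special vertex $x$, observes that all neighbours of $u_p$ lie on $P$, which has at most $2k-1$ vertices, so by bipartiteness $d(u_p)\le k-1$, deletes $u_p$ and inducts on $a+b$). However, as written your plan has a genuine quantitative gap: the hypothesis only gives that every path starting at $v$ has at most $2k-1$ vertices, hence a DFS tree rooted at $v$ has depth at most $2k-2$; it does \emph{not} force eccentricity ``roughly $k-1$'' nor ancestor chains of length at most $k-1$. A path on $2k-1$ vertices with $v$ as an endpoint already shows this: the deepest vertex has $2k-2$ ancestors. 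The BFS variant is also broken where you ``splice a down-path and an up-path'': two shortest paths from the root may share vertices, so their concatenation need not be a path, and you cannot conclude $i\le k-1$ for the levels.

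The missing idea that repairs the DFS count is the parity along root-to-vertex paths coming from bipartiteness: a vertex $u$ at depth $d\le 2k-2$ can only be adjacent to ancestors of the opposite colour, i.e.\ those at depths $d-1,d-3,\dots$, and there are at most $\lceil d/2\rceil\le k-1$ of these. Since in a DFS tree every edge of $G$ joins a vertex to an ancestor or descendant, charging each edge to its deeper endpoint then gives at most $k-1$ per non-root vertex and $0$ for the root, so $|E(G)|\le (k-1)(a+b-1)$, even slightly stronger than required. Your remark about ``splitting the count across the two colour classes'' does not supply this: summing separately over $A$ and $B$ merely reorganizes the total, whereas the bipartition must be used per vertex to halve the number of admissible ancestors from $2k-2$ to $k-1$ --- exactly the same parity step the paper uses to get $d(u_p)\le k-1$. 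With that correction your argument goes through (and avoids induction altogether); without it, the per-vertex charge is only bounded by $2k-2$ and you get the weaker bound $2(k-1)(a+b)$ that you yourself flagged as the obstacle.
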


\begin{lemma}\label{lem:lem2}
    For every $k,d \geq 1$, for any $S_{2k+1,d*1}$-free connected bipartite graph $G$ of part size $a$ and $b$ with $b\geq a \geq k$, if $G$ has at least $ k(b+a) +1 $ edges, then every vertex of $G$ has a degree at most $k+d-1$.
\end{lemma}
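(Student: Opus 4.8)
The plan is to prove the contrapositive on the degrees: assume some vertex $v\in V(G)$ has $d(v)\geq k+d$, and construct a copy of $S_{2k+1,d*1}$ in $G$, contradicting the hypothesis. Recall that $S_{2k+1,d*1}$ is obtained from a path on $2k+1$ vertices by attaching $d$ further leaves to one of its endpoints; hence it suffices to produce a path $P$ on $2k+1$ vertices having $v$ as an endpoint, together with $d$ neighbours of $v$ outside $V(P)$, since then $P$ with the $d$ pendant edges at $v$ is exactly such a broom.

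The first step is to get a sufficiently long path through $v$. I would invoke \Cref{lem:all_path2} with $k+1$ in place of $k$. Its conclusion for that parameter reads $|E(G)|\leq\big((k+1)-1\big)(a+b)=k(a+b)$, which is contradicted by the hypothesis $|E(G)|\geq k(a+b)+1$; therefore the premise of \Cref{lem:all_path2} fails as well, i.e. every vertex of $G$ is the endpoint of a path on $2(k+1)=2k+2$ vertices. (In particular $|V(G)|\geq 2k+2$, so this is not vacuous; and $k(a+b)+1$ is precisely the least number of edges for which \Cref{lem:all_path2}, applied with parameter $k+1$ rather than $k$, forces a path on $2k+2$ vertices through every vertex, which is why this threshold appears in the statement.) Applying this to $v$ and discarding the last vertex of the resulting path gives a path $P = v p_2 p_3 \ldots p_{2k+1}$ on $2k+1$ vertices with $v$ as an endpoint.

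The second step is a bipartite parity count. All neighbours of $v$ lie in the part of the bipartition not containing $v$; among the vertices $p_2,p_3,\dots,p_{2k+1}$ of $P$, those in that part are exactly $p_2,p_4,\dots,p_{2k}$, that is, $k$ of them. Hence $|N(v)\cap V(P)|\leq k$, so $|N(v)\setminus V(P)|\geq (k+d)-k=d$. Choosing $d$ distinct vertices $u_1,\dots,u_d\in N(v)\setminus V(P)$, the subgraph formed by $P$ and the edges $vu_1,\dots,vu_d$ is a copy of $S_{2k+1,d*1}$: the $u_i$ are pairwise distinct and lie outside $V(P)$, so they are the $d$ leaves of a star whose centre is the endpoint $v$ of $P$. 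This contradicts $S_{2k+1,d*1}$-freeness, so in fact $\Delta(G)\leq k+d-1$.

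Once \Cref{lem:all_path2} is available the argument is short, and the only delicate point is the bookkeeping: one must invoke \Cref{lem:all_path2} with parameter $k+1$ rather than $k$ so that the guaranteed path is long enough, trim it to precisely $2k+1$ vertices so that the parity count yields $d$ — and not merely $d-1$ — neighbours of $v$ off the path (the inequality is tight), and check that the threshold $k(a+b)+1$ is exactly what makes this go through. No separate treatment of small $k$ or $d$ is needed, since \Cref{lem:all_path2} already covers the case where $G$ is too small to contain a path on $2k+2$ vertices.
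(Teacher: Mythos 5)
Your proof is correct and follows essentially the same route as the paper: invoke Lemma~\ref{lem:all_path2} (with parameter $k+1$) to get a path on $2k+1$ vertices ending at the given vertex, then use bipartiteness to bound its neighbours on the path by $k$ and $S_{2k+1,d*1}$-freeness to bound its neighbours off the path by $d-1$. The only cosmetic difference is that you phrase it as a contradiction from $d(v)\geq k+d$ rather than bounding the degree directly.
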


\begin{proof}[Proof of Lemma~\ref{lem:all_path2}]
    Let $G$ be a connected bipartite graph with parts of size $a$ and $b$ with $b \geq a$, such that there exists a vertex of $G$ which is not the endpoint of a path of size $2k$.
    We proceed by induction on $a+b = |V(G)|$.
    \begin{itemize}
        \item If $a < k$ or $b < k$ then $|E(G)| \leq (k-1)  \max\{a,b\} \leq (k-1)(a+b)$.
        \item If $b \geq a \geq k$, then $b+a \geq 2k$. Let $x$ be a vertex of $G$ which is not the endpoint of a path of size $2k$ and let $P=xu_1 \ldots u_p$ be a path of maximum size starting with $x$. Then $p+1\leq 2k-1$ and thus $d(u_p) \leq k-1$. Let $G' = G \setminus\{u_p\}$. Note that $G'$ is a bipartite graph such that there exists a vertex which is not the endpoint of a path of size $2k$. Moreover $G'$ is also connected because $u_p$ was the endpoint of a path of maximal size. By induction hypothesis, we have $|E(G')| \leq (k-1)(a+b-1)$ so $|E(G)| \leq |E(G')| + k-1 \leq (k-1)(a+b)$.
    \end{itemize}
\end{proof}

\begin{proof}[Proof of Lemma~\ref{lem:lem2}]
    Let $G$ be an $S_{2k+1,d*1}$-free connected bipartite graph of part size $a$ and $b$ with $b\geq a$ and $|E(G)| \geq k(b+a) +1$. By Lemma~\ref{lem:all_path2}, every vertex of $G$ is the endpoint of a path of size $2k+1$ (in fact $2k+2$, but this is unnecessary for the rest of the proof). 
    
    Let $x$ be a vertex of $G$ and let $P$ be a path of length $2k+1$ starting with $x$. The vertex $x$ can only have $d-1$ neighbours not on $P$, otherwise $S_{2k+1,d*1}$ would appear in $G$, and since $G$ is bipartite, the vertex $x$ can have at most $k$ neighbours on $P$. So $x$ has degree at most $k+d-1$.
    
\end{proof}

We can now prove the main theorem of this section.

\thmBrooms*
On a side note, the case where $n\leq k+d$ is easy, as the complete bipartite graph $K_{n,n}$ cannot contain $S_{2k,d*1}$.

We start by proving the lower bound:
\begin{lemma}\label{lem:lower_bound2}
     For every $k,d \geq 2$ and for every $n$, we have $ex_{b,c}(n,n,S_{2k+1,d*1}) \geq ex_{b,c}(n,n,S_{2k,d*1}) \geq nd $.
\end{lemma}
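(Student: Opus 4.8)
The plan is to exhibit, for every $n$, a connected bipartite graph on parts of size $n$ and $n$ that is $S_{2k,d*1}$-free (hence also $S_{2k+1,d*1}$-free, since $S_{2k,d*1}$ is a subgraph of $S_{2k+1,d*1}$, giving the first inequality for free) and has exactly $nd$ edges. The natural candidate is a graph that is ``locally sparse'' enough that no vertex can serve as the handle-end of a long path while also carrying $d$ extra pendant branches. Concretely, I would take the union of a small dense part that bounds path lengths together with a $d$-regular-ish bipartite graph to reach $nd$ edges. The simplest construction: let $A=\{u_1,\dots,u_n\}$, $B=\{v_1,\dots,v_n\}$, and make every $u_i$ adjacent to $v_i, v_{i+1}, \dots, v_{i+d-1}$ (indices mod $n$). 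This is a connected $d$-regular bipartite graph with exactly $nd$ edges.

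The key step is to verify $S_{2k,d*1}$-freeness. I would argue that in this circulant-type graph every path has bounded length — in fact, I expect that a careful look shows the longest path is short enough (on the order of $O(d)$ or so, independent of $n$, once $d$ is small relative to $k$)? That is actually \emph{false} for a cycle-like structure, so instead I would switch to a construction that genuinely kills long paths: take a complete bipartite graph $K_{d,d}$ (or $K_{k-1,\text{something}}$) on a bounded vertex set to which all remaining vertices are attached as leaves in a balanced way, mimicking the construction of Proposition~\ref{th:borneinf_general}. Precisely, I would take $d$ vertices on each side forming $K_{d,d}$, then attach each of the remaining $n-d$ vertices of $A$ as a leaf to a single fixed vertex of $B$, and symmetrically each of the remaining $n-d$ vertices of $B$ as a leaf to a fixed vertex of $A$ — then adjust the edge count to land on exactly $nd$ by adding further edges among the $K_{d,d}$-core or redistributing, checking that no $S_{2k,d*1}$ appears because every vertex outside the core has degree $1$ and the core is too small to host the path part.

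The main obstacle is reconciling two competing demands: we need enough edges ($nd$ of them, which grows linearly in $n$), but $S_{2k,d*1}$-freeness forbids any vertex of degree $\ge d$ from being the endpoint of a $P_{2k+1}$, so high-degree vertices must be confined to a region with no long paths through it. Balancing these — getting the count to be \emph{exactly} $nd$ rather than merely $\Theta(n)$ — is the delicate part, and I would handle it by starting from a clean $d$-regular bipartite graph of girth $> 2k$ (which exists for $n$ large by standard constructions) and checking directly that $d$-regularity plus large girth rules out $S_{2k,d*1}$: a broom $S_{2k,d*1}$ has a vertex of degree $d+1$, impossible in a $d$-regular graph. That observation makes the verification immediate, and for small $n$ (where the side remark already notes $K_{n,n}$ works when $n \le k+d$) one patches the few remaining cases by hand. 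I would therefore present the $d$-regular large-girth graph as the main construction and defer the degree-count bookkeeping and small-$n$ cases to short remarks.
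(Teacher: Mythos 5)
Your proposal ends up in the right place, and in fact your very first construction is exactly the paper's: the circulant bipartite graph in which $u_i$ is joined to $v_i,\dots,v_{i+d-1}$ (indices mod $n$) is connected, $d$-regular, and has precisely $nd$ edges, and the observation you only reach at the very end --- that the broom $S_{2k,d*1}$ contains a vertex of degree $d+1$, so it cannot occur in any graph of maximum degree $d$ --- already certifies its $S_{2k,d*1}$-freeness (and the first inequality is immediate, as you say). That one sentence is the paper's entire verification; no bound on path lengths is needed, so the detour through a $K_{d,d}$-core with pendant leaves and then through $d$-regular graphs of girth $>2k$ is superfluous. Moreover, insisting on large girth would introduce an avoidable gap: standard constructions only provide such graphs for $n$ sufficiently large, and your patching ($K_{n,n}$ for $n\le k+d$) does not cover the intermediate range, whereas the circulant construction works for every $n\ge d$ with no girth hypothesis at all. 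So keep your first construction and your last observation, delete everything in between, and you have the paper's proof.
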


\begin{proof}[Proof of Lemma~\ref{lem:lower_bound2}]
    The first part of the inequality is immediate, we focus on the second part.
    We build a connected balanced bipartite graph $G$ of size $2n$ as follows. 
    Let $A = \{u_0,\ldots,u_{n-1}\}$ and $B = \{v_0,\ldots,v_{n-1}\}$ be the two sides of the bipartition of $G$. We start from the empty graph and for every $1\leq i\leq n$ and every $0\leq j\leq d-1$, we add $u_iv_{(i+j \mod{n})}$ to $E(G)$ .
    We have $|E(G)| = nd$. Moreover, $G$ has maximum degree $d$, so $S_{2k,d*1}$ cannot be a subgraph of $G$.  
\end{proof}

Now, we focus on the upper bound:
\begin{lemma}\label{lem:upper_bound2}
    For every $k,d \geq 2$ and for every $n$ such that $n\geq d^2/2$ and $d>2k$, we have $ex_{b,c}(n,n,S_{2k+1,d*1}) \leq nd $.
\end{lemma}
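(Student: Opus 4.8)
The plan is to argue by contradiction: assume $|E(G)|\geq nd+1$ and manufacture a copy of $S_{2k+1,d*1}$. First I would extract structure from the two preliminary lemmas. Since $d>2k$ we have $nd+1>2kn=k(n+n)$, so Lemma~\ref{lem:lem2} yields $\Delta(G)\leq k+d-1$, and Lemma~\ref{lem:all_path2} applied with $k+1$ in place of $k$ (legitimate because $nd+1>\bigl((k+1)-1\bigr)(n+n)$) yields that every vertex of $G$ is an endpoint of a path of size $2k+2$, hence of a path of size $2k+1$. Moreover $2|E(G)|\geq 2nd+2$ and $|V(G)|=2n$, so some vertex $c$ has $\delta:=\deg(c)\geq d+1$; note $d+1\leq\delta\leq k+d-1$.

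The core of the proof is to build $S_{2k+1,d*1}$ with centre $c$. I would do this by choosing a set $D\subseteq N(c)$ with $|D|=d$ and a vertex $x_1\in N(c)\setminus D$ such that, in the graph $G':=G-(\{c\}\cup D)$, the vertex $x_1$ is an endpoint of a path $Q$ of size $2k$. Then $c\,x_1\,Q$ is a path of size $2k+1$ whose vertex set is disjoint from $D$, while $c$ is adjacent to every one of the $d$ vertices of $D$; so $c\,x_1\,Q$ together with $D$ is a copy of $S_{2k+1,d*1}$, the desired contradiction. One should note that deleting $d$ neighbours of $c$ is really necessary here: since a path of size $2k$ through $G-c$ meets at most $k$ neighbours of $c$ while $\delta\leq k+d-1$, without removing such a set $c$ is only guaranteed at most $d-1$ neighbours off the path.

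To produce $Q$, the point is that $G'$ is dense regardless of the choice of $D$ and $x_1$. It has $2n-1-d$ vertices; removing $c$ deletes $\delta\leq k+d-1$ edges, and then, $D$ being an independent set of maximum degree $\leq k+d-1$ (and each vertex of $D$ already having lost its edge to $c$), removing $D$ deletes at most $d(k+d-2)$ further edges, so $|E(G')|\geq nd+1-(k+d-1)-d(k+d-2)$. A direct computation gives
\[
\bigl(nd+1-(k+d-1)-d(k+d-2)\bigr)-(k-1)(2n-1-d)=n(d-2k+2)-(d^2-1),
\]
which is positive because $d\geq 2k+1$ forces $d-2k+2\geq 3$, whence $n(d-2k+2)\geq 3n\geq \tfrac32 d^2>d^2-1$ using $n\geq d^2/2$. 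Hence $|E(G')|>(k-1)|V(G')|$. Consequently, if $x_1$ lies in a connected component $C$ of $G'$ with $|E(C)|>(k-1)|V(C)|$, then by the contrapositive of Lemma~\ref{lem:all_path2} applied to $C$ every vertex of $C$, in particular $x_1$, is an endpoint of a path of size $2k$ in $C\subseteq G'$, and we are done.

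The remaining step — and the one I expect to be the genuine obstacle — is to choose $c$, $D$ and $x_1$ so that $x_1$ falls into a \emph{heavy} component of $G'=G-\{c\}-D$; it would suffice to make $G'$ connected. The difficulty is that deleting $c$ together with $d$ of its neighbours can disconnect $G$ and strand the surviving neighbours of $c$ in light components. I would attack this through a spanning-tree (e.g.\ DFS) argument: if $G-c$ is connected, try to take $D$ to be $d$ neighbours of $c$ that are non-separating in $G-c$ (say leaves of a suitable DFS tree of $G-c$), so that $G-c-D$ stays connected; if $G-c$ is disconnected (so $c$ behaves like a cut vertex), re-select a high-degree centre inside one of the pieces, noting that by the edge count above some piece is still heavy. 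The edge count also shows that the light components of $G'$ together carry at most $(k-1)\cdot 2n$ edges while $G'$ itself carries $\gtrsim nd$, so the heavy part is unavoidably present and large, and the task reduces to arranging that an available neighbour of the centre lands there; combined with $\Delta(G)\leq k+d-1$ this is where the hypothesis $n\geq d^2/2$ is used a second time. Making this dichotomy airtight — in particular certifying that a surviving neighbour of $c$ ends up on the heavy side — is the main work that the above skeleton leaves open.
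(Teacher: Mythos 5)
Your skeleton is correct up to the edge count ($|E(G')|>(k-1)|V(G')|$ does hold for any choice of $D$), but the proof has a genuine gap, and it is exactly the one you flag yourself: nothing guarantees that a surviving neighbour $x_1$ of the centre $c$ lies in a \emph{heavy} component of $G'=G-\{c\}-D$, and the repair sketches you offer do not obviously work. Choosing $D$ to be non-separating vertices of a DFS tree of $G-c$ conflicts with the constraint that $D$ must consist of $d$ neighbours of $c$ while still leaving a neighbour $x_1$ outside $D$; and when $c$ is a cut vertex, ``re-selecting a high-degree centre inside a heavy piece'' is not justified, since the global average degree does not localize to give a vertex of degree $\geq d+1$ in that piece. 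So as written the argument establishes only that \emph{some} component of $G'$ is heavy, which by itself yields no copy of $S_{2k+1,d*1}$.

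The missing idea is to reverse the roles so that the location of the heavy component becomes irrelevant. Delete the hub $x$ together with $d+1$ (not $d$) of its neighbours $u_1,\dots,u_{d+1}$, and argue that \emph{every} component $C$ of the remaining graph satisfies $|E(C)|\leq (k-1)|V(C)|$: by connectivity of $G$, some $v\in C$ is adjacent to a deleted vertex $w\in\{x,u_1,\dots,u_{d+1}\}$, and if every vertex of $C$ (in particular $v$) were the endpoint of a path of size $2k$ inside $C$, that path could be extended through $v$ to $w$ and, if $w=u_j$, on to $x$; the star is then formed at $x$ by the deleted neighbours not used ($d+1$ of them, or $d$ if one was used as $u_j$), which are automatically disjoint from the path because the path lives in $C$. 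This is why $d+1$ deletions are needed and why no control over which component is heavy, and no connectivity of $G'$, is required: summing $|E(C_i)|\leq(k-1)|V(C_i)|$ over all components and adding back at most $(d+2)(k+d-1)-(d+1)$ edges incident to the deleted set gives $|E(G)|\leq 2kn-2n+d^2+d-1<nd$ under $n\geq d^2/2$ and $d>2k$, the desired contradiction. In short: your local construction (path out of the centre into a prescribed heavy component) should be replaced by a global counting argument in which a heavy component anywhere already produces the broom.
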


\begin{proof}[Proof of Lemma~\ref{lem:upper_bound2}]
    Let $G$ be a $S_{2k+1,d*1}$-free connected balanced bipartite graph of size $2n$ and suppose that $|E(G)| > nd$.  We have $|E(G)| > nd \geq  2kn$, so by Lemma~\ref{lem:lem2} we know that $G$ has maximum degree at most $k+d-1$.
    Let $x$ be a vertex in $G$ of maximum degree. As $|E(G)| > nd$, we must have $d(x)\geq d+1$. Let $u_1, \ldots ,u_{d+1}$ be $d+1$ arbitrary neighbours of $x$ and let $G' = G \setminus\{x, u_1,\ldots, u_{d+1}\}$. 
    Let $C_1, \ldots, C_p$ be the connected components of $G'$ (possibly $p=1$). For each $i\leq p$, since $G$ is connected, $C_i$ is adjacent in $G$ to at least one vertex of the set $\{x, u_1,\ldots, u_{d+1}\}$. Consequently, in each $G[C_i]$, not every vertex is the endpoint of a path of length $2k$: otherwise we would have $S_{2k+1,d*1}$ as a subgraph. 
    By Lemma~\ref{lem:all_path2}, we derive that for every $i\leq p$, we have $|E(C_i)| \leq (k-1)|V(C_i)|$. Thus we obtain: $$ |E(G')| = \sum_{i=1}^{p} |E(C_i)| \leq (k-1)\sum_{i=1}^{p}|V(C_i)| = (k-1) |V(G')| = (k-1)(2n - (d+2)).$$

    Furthermore, the graph $G'$ was obtained from $G$ by removing $d+2$ vertices, each of degree at most $k+d-1$ in $G$. By removing the double-counting of the edges of the form $xu_i$, we obtain $|E(G)| \leq |E(G')| + (d+2)(d+k-1) - (d+1) \leq (k-1)(2n - (d+2)) + (d+2)(d+k-2) + 1 = 2kn - 2n +d^2 +d - 1$.
    As $n \geq d^2/2$ and $d> 2k$, we have $|E(G)|\leq 2kn +d -1 < (2k+1)n \leq dn \leq |E(G)|$, a contradiction.
\end{proof}

In the cases where either we do not have $n\geq d^2/2$ and $d>2k$, but have $d<k$, the following bound holds.
\thmBroomsTwo*

\begin{proof}[Proof of Theorem~\ref{th:d<n}]
    Let $G$ be a $S_{2k+1,d*1}$-free connected balanced bipartite graph on $2n$ vertices and suppose that $|E(G)| \geq 2nk + 1$.
    According to Lemma~\ref{lem:lem2}, all the vertices of $G$ have degree at most $d+k-1$, so $|E(G)| \leq n(k+d-1) \leq 2n(k-1)$ which is a contradiction.
\end{proof}
Slightly worse bounds can be obtained for $d\leq 2k$.

\bibliographystyle{alpha}
\bibliography{coolnew}

\end{document}